\newtheorem{theorem}{Theorem}[section]
\newtheorem{lemma}{Lemma}[section]
\newtheorem{prop}{Proposition}[section]
\newtheorem{defn}{Definition}[section]
\newtheorem{cor}{Corollary}[section]
\theoremstyle{definition}
\newtheorem{remark}{Remark}[section]
\newcommand{\nbd}{\nu}
\newcommand{\relative}{relative\ }
\newcommand{\HF}{\widehat{HF}}
\newcommand{\CF}{\widehat{CF}}
\newcommand{\HFK}{\widehat{HFK}}
\newcommand{\CFK}{\widehat{CFK}}
\newcommand{\on}{\operatorname}
\newcommand{\PD}{\on{PD}}
\renewcommand{\d}{\partial}
\newcommand{\relhom}{\alpha}
\newcommand{\Spinc}{\on{Spin}^c}
\newcommand{\gr}{\on{A}}
\newcommand{\aalpha}{\mbox{\boldmath$\alpha$}}
\newcommand{\bbeta}{\mbox{\boldmath$\beta$}}
\newcommand{\x}{{\bf x}}
\newcommand{\y}{{\bf y}}
\newcommand{\F}{\mathcal{F}}
\newcommand{\FF}{\mathbb{F}}
\newcommand{\D}{\mathcal{D}}
\renewcommand{\P}{\mathcal{P}}
\newcommand{\ZZ}{\mathbb{Z}}
\newcommand{\T}{\mathbb{T}}
\newcommand{\RR}{\mathbb{R}}
\newcommand{\Q}{\mathbb{Q}}
\newcommand\goth[1]{\mathfrak{#1}}
\newcommand{\s}{\goth{s}}
\newcommand{\Sym}{\mathrm{Sym}}
\newcommand{\ti}{\tilde}
\newcommand{\coren}{K_n}
\newcommand{\corezero}{K_0}
\newcommand{\Yn}{Y_n}
\newcommand{\Yzero}{Y_0}
\newcommand{\iotan}{{\iota}^n}
\newcommand{\deltan}{{\delta}^n}
\newcommand{\nbdK}{\nbd{K}}
\newcommand{\nbdKprime}{\nbd{K'}}
\newcommand{\Z}{\mathbb{Z}}
\def\endproofof{\relax\ifmmode\expandafter\endproofmath\else
  \unskip\nobreak\hfil\penalty50\hskip.75em\hbox{}\nobreak\hfil\bull
  {\parfillskip=0pt \finalhyphendemerits=0 \bigbreak}\fi}
\def\endproofofmath$${\eqno\bull$$\bigbreak}
\def\bull{\vbox{\hrule\hbox{\vrule\kern3pt\vbox{\kern6pt}\kern3pt\vrule}\hrule}}
\newcommand{\OneHalf}{\frac{1}{2}}
\newcommand{\cm}{\cdot}
\newcommand\SpinC{\mathrm{Spin}^c}
\newcommand\RelSpinC{\underline{\SpinC}}
\newcommand\relspinc{\underline{\spinc}}
\newcommand\Filt{\mathcal F}
\newcommand\ModSphere{\ModFlow\left({\mathbb S}\longrightarrow 
\Sym^{g-1}(\Sigma_{1})\times \Sym^2(\Sigma_{2})\right)}
\newcommand\ModSpheres\ModSphere
\newcommand\CFa{\widehat{CF}}
\newcommand\HFa{\widehat{HF}}
\newcommand\UnparModSp{\widehat \ModSp}
\newcommand\UnparModFlow\UnparModSp
\newcommand\Mod\ModSp
\newcommand{\spinc}{\mathfrak s}
\newcommand{\spinct}{\mathfrak t}
\newcommand\ModMaps{\mathcal M}
\newcommand\ModSp\ModMaps
\newcommand\Ta{{\mathbb T}_{\alpha}}
\newcommand\Tb{{\mathbb T}_{\beta}}
\newcommand\alphas{\mbox{\boldmath$\alpha$}}
\newcommand\betas{\mbox{\boldmath$\beta$}}
\newcommand\spincrel\relspinc
\newcommand\Dual{\mathcal D}
\newcommand\Duality\Dual
\newcommand\ons{Ozsv{\'a}th and Szab{\'o}}
\newcommand\os{{Ozsv{\'a}th-Szab{\'o}}}
\begin{document}

\author[Matthew Hedden]{Matthew Hedden} \thanks{MH is partially supported by NSF grant DMS-0906258 and an A.P. Sloan Research Fellowhip}
\address{Department of Mathematics, Michigan State University, East Lansing, MI 48823}
\email{mhedden@math.msu.edu}

\author[Olga Plamenevskaya]{Olga Plamenevskaya} 
\thanks{OP is partially supported by NSF grant DMS-0805836}
\address{Department of Mathematics, Stony Brook University, Stony Brook, NY 11790}
\email{olga@math.sunysb.edu}
\title[Dehn surgery, rational open books, and knot Floer homology]{Dehn surgery, rational open books, \\ and knot Floer homology}
%\subjclass{?}

\begin{abstract} By recent results of Baker--Etnyre--Van Horn-Morris, a rational open book decomposition defines a compatible contact structure.  
We show that the Heegaard Floer contact invariant of such a contact structure can be computed 
in terms of the knot Floer homology of its (rationally null-homologous) binding.
 We then use this description of contact invariants, together with a  formula for the knot Floer homology of the core of a surgery solid torus, 
to show that certain manifolds obtained by surgeries on bindings of open books carry tight contact structures. 
\end{abstract}
\maketitle

\section{Introduction} %Open books became an important tool in contact topology after Giroux established a correspondence 
%between open books and contact structures. A fibered knot in a 3-manifold induces an open book decomposition if the boundary of the fiber is a longitude of the knot.  Recently, Baker--Etnyre--Van Horn-Morris \cite{BEV} showed that this condition can be relaxed, and a compatible contact structure can be defined, uniquely up to isotopy,  even for rationally null-homologous fibered knots. Following \cite{BEV}, we consider contact structures induced by these {\em rational open books}; we will reserve the term ``open book" for the traditional 
%definition, often talking about ``honest open books" to emphasize the difference.
%Rational open books arise naturally as the result of Dehn surgery on the bindings of honest open books, and thus provide a good approach to describe contact structures on such surgered manifolds. We remark that by \cite{BEV}, one can construct resolutions 
%of rational open books by cabling the binding; the result is an honest open book compatible with the same contact structure.
%However, cabling increases the genus of the open book; working with the rational open book of smaller genus provides more insight 
%into the properties of the contact structure.

Dehn surgery is the process of excising a neighborhood of an embedded circle 
   (a knot) in a $3$-dimensional manifold and subsequently regluing it with a diffeomorphism of the bounding torus. 
 This construction has long played a fundamental role in the study of $3$-manifolds, and provides a complete method of construction.  If the $3$-manifold is equipped with  extra structure, one
 can hope to adapt the surgery procedure to incorporate this structure.   
This idea has been fruitfully employed in a variety of situations.  
 
 Our present interest lies in the realm of $3$-dimensional contact geometry. 
 Here, {\em Legendrian} (and  more recently, {\em contact})   surgery has been an invaluable tool for the 
study of $3$-manifolds equipped with a contact structure (i.e. a completely non-integrable two-plane field).  
For a contact surgery on a {\em Legendrian} knot, we  start with  a knot which is tangent to the contact structure,
 and perform Dehn surgery in such a way that the contact structure on the knot complement 
is extended over the surgery solid torus \cite{DGS}. 
 To guarantee that the extension is unique, a condition on the surgery slope is required. Namely, the slope 
must differ from the contact framing by $\pm \text{meridian}$; this gives two types of surgery, Legendrian surgery 
(aka $(-1)$ contact surgery) and its inverse, $(+1)$ contact surgery.

A central goal of this article is to study a different situation in 
which Dehn surgery uniquely produces a contact manifold.  
 For this we employ an important tool in $3$-dimensional contact geometry: open book decompositions.  
 An {\em open book decomposition} of a $3$-manifold $Y$ is equivalent to a choice of {\em fibered} knot
  $K\subset Y$, by which we mean a knot whose complement fibers over the circle so that the boundary of 
any fiber is a longitude.   We refer to $K$ as the {\em binding} of the open book.  
 From an open book decomposition, one can produce a contact structure which is unique, up to isotopy.  
Note that for this contact structure, the knot $K$ will be {\em transverse} to the contact planes. 
Surgeries on transverse knots were studied in \cite{Ga}, but our perspective is different from \cite{Ga}. 

Given a knot $K\subset Y$,  denote the manifold obtained by Dehn surgery with slope $p/q$ by $Y_{p/q}$. 
 There is a canonical knot induced by the surgery; namely, the core of the solid torus used in the construction. 
 We denote this knot by $K_{p/q}$.   If we perform surgery on a fibered knot $K\subset Y$ then the complement 
of the induced knot $K_{p/q}\subset Y_{p/q}$ fibers over the circle; indeed, it is homeomorphic to the complement
 of $K$.  However, $K_{p/q}$ is often not fibered in the traditional sense, as the boundaries of the fibers are not 
longitudes.  In fact $K_{p/q}$ will be homologically essential if $p\ne 1$, and so will not have a Seifert surface at all. 
 If $p\ne 0$, then $K$ will be {\em rationally null-homologous}, meaning that a multiple of its homology class is zero. 
 We  refer to a rationally null-homologous knot whose complement fibers over the circle as a {\em rationally fibered} 
knot, and the corresponding decomposition of the $3$-manifold as a {\em rational open book decomposition}.    
 Baker--Etnyre--Van Horn-Morris \cite{BEV} recently showed that a rational open book gives rise to a contact 
structure which is unique, up to isotopy.   Thus a  fibered knot $K\subset Y$ induces a unique contact
 structure $\xi$ on $Y$, and Dehn surgery on $K$ gives rise to a rationally fibered knot $K_{p/q}\subset Y_{p/q}$ 
inducing a unique contact structure $\xi_{p/q}$ on $Y_{p/q}$.  The purpose of this article is to investigate the
 relationship between these contact structures.

Our investigation will rely on Heegaard Floer homology, which provides a
 powerful invariant of contact structures.  Denoted $c(\xi)$, this invariant lives in $\HFa(-Y)$, 
the Heegaard Floer homology of the manifold $Y$ with its orientation reversed ($\FF=\Z/2\Z$ coefficients
 are used throughout, to avoid any sign ambiguities).  We study $\xi_{p/q}$ by way of its contact invariant, 
 so it will be useful to understand how to compute the contact invariant associated to a rational open book. 
 Our first theorem states that, as in the null-homologous case, the contact invariant is a function of 
the knot Floer homology of the binding.  

%\begin{theorem} Let $K\subset Y$ be a rationally fibered knot, and $\xi$

%The goal of this paper is to understand the Heegaard Floer invariants of the contact structures defined by rational open books and to use these invariants 
%to establish tightness of certain contact manifolds. Our first aim is to  explain how to compute the Heegaard Floer contact invariant $c(\xi)$ of a contact
%structure $\xi$ given by a rational open book $(Y, K)$. 
%Indeed, similarly to the case of honest open books, the contact invariant $c(\xi)$  
%can be computed in two alternate ways. Throughout the paper, we work with coefficients in $\FF=\ZZ/2$, though this is only for the convenience of statements. 

To understand the statement, recall that a rationally null-homologous knot $K\subset Y$ induces a $\Z$-filtration of $\CFa(-Y)$;  that is,  a sequence of subcomplexes with integer indices:
$$ 0\subset \F(\mathrm{bottom})\subset \F(\mathrm{bottom}+1)\subset... \subset \CFa(-Y).$$ (See Section \ref{sec:filtration} for more details on the filtration.) We have
%The filtration depends on a choice of relative homology class $\alpha\in H_2(Y\setminus K,\partial (Y\setminus K))$ denote this filtration by $\F$ and $\xi_K$ the contact structure induced by the associated rational open book decomposition

\begin{theorem}  \label{cont-inv} Let $K \subset Y$ be a rationally fibered knot, and $\xi_K$ the contact structure induced by the associated rational open book decomposition.  Then $H_*(\F(\mathrm{bottom}))\cong \FF\cm\langle c\rangle$.  Moreover, if $$ \iota: \F(\mathrm{bottom})\rightarrow \CFa(-Y),$$ is the inclusion map of the lowest non-trivial subcomplex, then   $\iota_*(c)=c(\xi_K)\in \HFa(-Y)  $.  \end{theorem}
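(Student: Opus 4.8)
The plan is to construct a Heegaard diagram tailored to the rational open book, locate the distinguished ``contact'' generator in it, and then read both assertions off the combinatorics of that diagram. Following the constructions of \ons\ and of Honda--Kazez--Mati\'{c}, and using \cite{BEV} for the rational features, I would first build a doubly pointed Heegaard diagram $(\Sigma,\aalpha,\bbeta,w,z)$ adapted to the rational open book: take $\Sigma$ to be the double $S_{1/2}\cup_\partial(-S_0)$ of two pages, fix a cut system of properly embedded arcs $\{a_1,\dots,a_n\}$ for the page $S$, let $\{b_1,\dots,b_n\}$ be their monodromy images isotoped so that each $a_i$ meets $b_i$ transversely in a single point $x_i\in S_{1/2}$, and put $\aalpha=\{a_i\cup a_i\}$, $\bbeta=\{b_i\cup b_i\}$, with the basepoints $w,z$ placed on the two sides of the binding, inside the region where the pages spiral into $K$. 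This diagram represents $(-Y,K)$, so it simultaneously computes $\CFa(-Y)$ and the $\ZZ$-filtration induced by $K$, and it carries the distinguished generator $\x_0=\{x_1,\dots,x_n\}$.

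From this diagram I would extract two facts. (i) $[\x_0]=c(\xi_K)$ in $\HFa(-Y)$: the pointed diagram $(\Sigma,\aalpha,\bbeta,z)$ is precisely the type of diagram that \ons\ associate to an open book, so this is their identification of the contact invariant, provided one knows it persists in the rational setting; here the bridge is \cite{BEV}, which identifies $\xi_K$ with the contact structure carried by an honest open book obtained from the rational one, whose associated Heegaard diagram can be compared to the one above so as to match $\x_0$ with the Honda--Kazez--Mati\'{c} generator. (ii) $\x_0$ is the unique generator of $\CFa(-Y)$ realizing the minimal Alexander grading, i.e.\ the unique one lying in the lowest nonzero filtration level, inside the $\Spinc$ summand $\s_{\xi_K}$: this is a relative Alexander grading computation, showing that for every generator $\y\neq\x_0$ there is a positive domain in $\pi_2(\x_0,\y)$ with strictly larger local multiplicity at $z$ than at $w$, so that $A(\y)>A(\x_0)$. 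This is the rational-open-book analogue of the computation placing the $EH$ class at the bottom of the knot filtration of an honest fibered knot; the one genuinely new point is the spiralling of $\partial S$ around the multiply wrapped binding, which alters the shape of the relevant domains near $K$ without changing the sign of the $n_z-n_w$ comparison.

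Granting (i) and (ii), the theorem follows formally. By (ii), $\F(\mathrm{bottom})$ is the one-dimensional subcomplex $\FF\cm\x_0$, necessarily with vanishing differential, so $H_*(\F(\mathrm{bottom}))\cong\FF\cm\langle[\x_0]\rangle$; set $c=[\x_0]$. Since the differential on $\CFa(-Y)$ preserves the filtration and lowers the Maslov grading by one, $\widehat\partial\x_0$ lies in $\F(\mathrm{bottom})=\FF\cm\x_0$ but in the wrong grading, whence $\widehat\partial\x_0=0$ in $\CFa(-Y)$; thus $\x_0$ is a genuine cycle and $\iota_*(c)=[\x_0]_{\HFa(-Y)}=c(\xi_K)$ by (i). I expect the main obstacle to be step (ii) --- writing down the explicit rational open book diagram and carrying out the Alexander grading bookkeeping near the multiply wrapped binding --- together with making (i) precise, i.e.\ verifying that the \os\ identification of the contact class genuinely survives the passage between honest and rational open books.
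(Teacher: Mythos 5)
Your overall strategy --- build a Heegaard diagram adapted to the rational open book, isolate a distinguished generator at the bottom of the filtration, and identify its class with $c(\xi_K)$ --- is the route the paper takes only for the rank-one statement (Proposition \ref{intersection-pt}); the identification with the contact invariant is proved by a different cabling argument, and it is precisely at the identification step that your proposal has a genuine gap. For a rational open book there is no \os-type diagram whose distinguished generator is \emph{defined} to represent $c(\xi_K)$: the invariant of $\xi_K$ is a priori only computable from an honest open book supporting $\xi_K$, which by \cite{BEV} is the cable $K'=K_{p,pn+1}$ ($p$ the order of $K$, $n\gg 0$). Your step (i) asserts that the diagram for the rational open book ``can be compared'' with the one for this honest open book ``so as to match $\x_0$ with the Honda--Kazez--Mati\'c generator,'' but that comparison is the entire content of the theorem and is nontrivial: the paper carries it out in Proposition \ref{cable} by putting $K$ and $K'$ on the \emph{same} Heegaard surface (via a $p$-fold finger move in the winding region), computing the two relative periodic domains representing $[F]$ and $[F']$, and showing that the extremal filtration levels have literally the same generators and the same differential, so that the two inclusion maps into $\CFa(-Y)$ coincide and $\iota_*(c)=\iota'_*(c)=c(\xi_{K'})=c(\xi_K)$. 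Without this (or some substitute), the second assertion of the theorem is not established.

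There are two further problems. First, your diagram construction does not go through as stated: the page of a rational open book meets $\partial\nbdK$ in a curve wrapping $p$ times around $K$, so it is not a Seifert surface and the double of two pages is not a Heegaard surface for $Y$ containing $K$ in the usual way. The paper's Proposition \ref{intersection-pt} instead builds the diagram from the honest fibration of the Dehn filling $Y_0$ and then replaces $\beta_1$ by the meridian $\mu$, homologous to $p\beta_1+q\gamma$; the uniqueness of the bottom generator then requires the arithmetic argument (using $\gcd(a,q)=1$) distinguishing the multiplicities at the points of $\alpha_1\cap\mu$, not just the ``$n_z>n_w$'' comparison you describe. Second, your step (ii) compares Alexander gradings using positive domains in $\pi_2(\x_0,\y)$; when $b_1(Y)>0$ such Whitney disks need not exist, which is why the paper introduces relative periodic domains and Lemma \ref{periodic} to compute grading differences. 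The formal deduction in your last paragraph is fine once (i) and (ii) are in place, but as written both (i) and (ii) are exactly the points that need proof.
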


In the case that $K$ is fibered in the traditional sense (so that it induces an honest open book decomposition of $Y$) this agrees with  \ons's definition of $c(\xi)$.     We also remark that the definition of the filtration depends on a choice of relative homology class, and the class used in the theorem  comes from the  fiber.  

The proof of Theorem \ref{cont-inv} uses a cabling argument.   More precisely,  an appropriate cable of $K$ is a fibered knot in the traditional sense, and results of \cite{BEV} relate the contact structure of  the resulting open book to that of the original rational open book.  We  prove the theorem by  developing a corresponding understanding of the behavior of the knot filtration under cabling.  This is aided by techniques developed in \cite{He}.   We should point out that while the cabling argument shows that  $H_*(\F(\mathrm{bottom}))\cong \FF$, we  give an alternate proof of this fact by constructing an explicit Heegaard diagram adapted to a rational open book  where the subcomplex in question is generated by a single element, Proposition \ref{intersection-pt}.  This is a rational analogue of the Heegaard diagram for fibered knots constructed in \cite{contOS}, and may be useful for understanding the interaction between properties of the monodromy of a rational open book and those of the contact invariant.  By combining the theorem with results of \cite{NiFibered} and \cite{He4} (see also \cite{Ra}) we  arrive at the following corollary.

\begin{cor}\label{cor:lens} Suppose $K\subset L(p,q)$ is a knot in a lens space and that integral surgery on $K$ yields the $3$-sphere.  Then $K$ is rationally fibered and the associated rational open book induces a  contact structure, $\xi_K$, with $c(\xi_K)\ne0$. Regarding $K$ in $-L(p,q)$, the lens space with orientation reversed, we obtain a contact structure $\xi_{\overline{K}}$ also satisfying  $c(\xi_{\overline{K}})\ne0$ .     
\end{cor}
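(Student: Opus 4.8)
The plan is to verify the hypotheses of Theorem \ref{cont-inv} for such a knot $K$, and then to extract nontriviality of the contact invariant from the fact that the ambient surgery is $S^3$. First I would recall that if integral surgery on $K\subset L(p,q)$ yields $S^3$, then $K$ is rationally null-homologous: indeed, $H_1(L(p,q))$ is finite, so every knot in a lens space is rationally null-homologous automatically, and the surgery dual to $K$ is an (honest) knot $K'\subset S^3$ on which some surgery returns $L(p,q)$. The key input is that a knot in $S^3$ admitting a lens space surgery is fibered: this is the theorem of \cite{NiFibered} (building on \cite{He4}, see also \cite{Ra}) that knot Floer homology detects fiberedness together with the fact, from \cite{OSlens} type arguments, that lens space surgeries force $\HFKa(S^3,K')$ to be "thin" of rank one in its top Alexander grading. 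Hence $K'$ is fibered in $S^3$, and therefore its complement fibers over the circle. Since the complement of $K$ in $L(p,q)$ is homeomorphic to the complement of $K'$ in $S^3$, the knot $K$ is rationally fibered, and so the associated rational open book defines a contact structure $\xi_K$ by \cite{BEV}.

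Next I would compute the contact invariant via Theorem \ref{cont-inv}. That theorem identifies $H_*(\F(\mathrm{bottom}))$ with $\FF$ and says $c(\xi_K)$ is the image of its generator under the inclusion-induced map $\iota_*\colon H_*(\F(\mathrm{bottom}))\to \HFa(-L(p,q))$. So the content is to show this inclusion is nonzero on homology. The mechanism is the surgery exact sequence / knot filtration for the dual knot: the filtration on $\CFa(-L(p,q))$ induced by $K$ is the same, up to the relevant reindexing, as the one coming from the fibered knot $K'\subset S^3$ after performing surgery. For a genuine fibered knot in $S^3$ the bottommost subcomplex has homology $\FF$ mapping isomorphically onto $\HFa(S^3)\cong\FF$ — this is Ozsv\'ath--Szab\'o's computation that the contact class of the open book supporting the tight contact structure on $S^3$ is the generator. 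The point is that passing from $S^3$ to $L(p,q)$ by the surgery does not kill this bottom generator, because the relevant portion of the knot filtration (bottom Alexander grading) is preserved under the large-surgery formula relating $\HFKa$ of a knot to $\HFa$ of its surgeries; concretely one uses the fact that for $|n|$ large, the bottom filtration level of $\CFa(S^3_n(K'))$ agrees with the bottom level of $\gA$-filtered $\CFa(S^3)$, hence has homology $\FF$ injecting into $\HFa$.

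The main obstacle I expect is precisely this last step: showing $\iota_*(c)\neq 0$ rather than merely $H_*(\F(\mathrm{bottom}))\cong\FF$. Theorem \ref{cont-inv} gives the abstract identification, but one must track the generator through the identification of $\CFa(-L(p,q))$ with a mapping cone (or large-surgery complex) for the dual knot $K'$ and check it survives in homology. I would handle this by invoking the explicit Heegaard diagram of Proposition \ref{intersection-pt}, where the bottom subcomplex is generated by a single intersection point $\x_0$; it then suffices to show $\x_0$ is not a boundary in $\CFa(-L(p,q))$, equivalently that it represents a nonzero class. Since $[\x_0]$ generates the bottom of the filtration and the filtered chain homotopy type refines $\HFa(-L(p,q))$, which is nontrivial in the relevant $\Spinc$ structure (lens spaces are $L$-spaces, so $\HFa$ has rank one there), the induced map on the associated graded in that grading is forced to be injective, giving $\iota_*(c)\neq 0$. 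Finally, for the orientation-reversed statement: $-L(p,q)\cong L(p,p-q)$ is again a lens space, and $K\subset -L(p,q)$ is still rationally fibered (its complement still fibers — orientation of the ambient manifold does not affect this), so the rational open book construction of \cite{BEV} again applies and Theorem \ref{cont-inv} again gives a contact structure whose invariant is the image of the bottom generator; the same $L$-space rank-one argument yields $c(\xi_{\overline K})\neq 0$.
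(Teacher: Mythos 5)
Your first step --- passing to the surgery-dual knot $K'\subset S^3$, invoking Ni's fiberedness theorem, and concluding that $K$ is rationally fibered so that Theorem \ref{cont-inv} applies --- is exactly what the paper does. The gap is in your argument that $\iota_*(c)\neq 0$. You assert that because $L(p,q)$ is an L-space, so $\HFa$ has rank one in the relevant $\Spinc$ structure, the map from the bottom of the filtration is ``forced to be injective.'' This is false as stated: the generator of $H_*(\F(\mathrm{bottom}))$ is a cycle in $\CFa$ that may perfectly well be a \emph{boundary} of a chain living in a higher filtration level, i.e.\ it may be killed by a higher differential in the spectral sequence from $\HFK$ to $\HFa$. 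Rank-one-ness of the target does not prevent this --- the left-handed trefoil in $S^3$ (an L-space) is fibered, its bottom filtration level has homology $\FF$, and yet the inclusion-induced map vanishes because the supported contact structure is overtwisted. So some additional input specific to the lens-space-surgery situation is required, and your appeal to ``the large-surgery formula'' is not enough, since a priori nothing tells you that $p$ is large relative to $g(K')$.

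The paper supplies precisely this missing input: by \cite[Theorem 1.4]{He4}, either $p\ge 2g(K')$ and $\mathrm{rk}\,\HFK(L(p,q),K)=\mathrm{rk}\,\HFa(L(p,q))=p$, or else $p=2g(K')-1$ and the rank is $p+2$; the second case is ruled out by \cite[Theorem 1.2]{Greene}. The resulting equality of total ranks forces the spectral sequence from $\HFK(L(p,q),K)$ to $\HFa(L(p,q))$ to degenerate, so every associated graded piece --- in particular the bottom one --- injects into $\HFa$, and this is what gives $\iota_*(c)\neq0$. The orientation-reversed statement then follows because reversing orientation changes neither rank. Your proposal is missing the He4/Greene rank computation entirely, and without it the non-vanishing does not follow.
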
 

\begin{remark}  \cite[Theorem 1.4]{contOS} shows that non-vanishing contact invariant implies  tightness, so the contact structures of the corollary are tight.  The corollary also applies to knots in L-spaces which admit homology sphere $L$-space surgeries. The proof of the corollary, contained in subsection 3.2, is based on the fact that the  Floer homology of  knots on which one can perform surgery to pass between L-spaces (manifolds with the simplest Heegaard Floer homology) is severely constrained. 
\end{remark}

 We find this corollary particularly intriguing, not due to the existence of a tight contact structure on $L(p,q)$ induced by $K$, but the additional tight contact structure on $-L(p,q)$.  To put this in perspective, if a null-homologous  fibered knot  $K\subset Y$ induces a tight contact structure on both $Y$ and $-Y$, then the monodromy of the associated open book is isotopic to the identity (otherwise it could not be right-veering with both orientations \cite{HKM2}). If one could show that, similarly, there are but a finite number of rationally fibered knots which induce tight contact structures on both $L(p,q)$ and $-L(p,q)$, this would lead to significant ---  if not complete --- progress on the Berge Conjecture (a conjectured classification of knots in the $3$-sphere admitting lens space surgeries).  Regardless, we hope that the geometric information provided by the contact structures induced by $K\subset L(p,q)$ can be of  aid in the understanding of lens space surgeries.

%\begin{theorem} \label{cont-inv}

%1) The element $c(\xi)$ is the image of the (unique) lowest filtration generator of $\CFK(-Y, K)$ in $\HF(-Y)$.

%2) There exists a Heegaard diagram for $\HF(-Y)$ (shown in Figure \ref{OS-diag}) and a distinguished intersection point $\x\in \T_\alpha \cap \T_\beta$,  such that $\x$ is a cycle whose image in  $\HF(-Y)$ gives  $c(\xi)$.    
    
%\end{theorem}

%More detailed statements will be given in Section 3; these results parallel the definition and 
%properties of the contact invariants for honest open books \cite{contOS}. However, the proofs of these results do not adapt directly:
%the fact that the binding is only rationally null-homologous leads to some additional difficulties.

In another direction, we can use a surgery formula for knot Floer homology to understand the contact invariant of  rational open books induced by Dehn surgery. (Here, the $3$-manifolds involved do not have to be L-spaces.) 
Our second main theorem is a non-vanishing result for the contact invariant in this situation.

%Our main result establishes non-vanishing of the contact invariant for certain open books which, by \cite[Theorem 1.4]{contOS} proves  tightness of the corresponding 
%contact structures. Let $Y$ be a 3-manifold   and  $(S, \phi)$ be an open book with  connected binding $K \subset Y$. Let $\xi_K$ be the contact structure on $Y$ induced by the open book.  Performing (non-zero) Dehn surgery  on the binding, one naturally obtains rational open books. 
%By studying the behavior of the contact invariants  under surgery we prove

\begin{theorem} \label{p/q-surgery} Let $K\subset Y$ be a fibered knot with genus $g$ fiber, and $\xi$ the contact structure induced by the associated open book.  Let $K_{p/q}\subset Y_{p/q}$ be the rationally fibered knot arising   as the core of the solid torus used to construct $p/q$ surgery on $K$, and $\xi_{p/q}$ the contact  structure

 \vspace{-0.035in} \noindent  induced by the associated rational open book.  Suppose  $c(\xi)\! \! \in\!  \HF(-Y)$ is non-zero. Then $c(\xi_{p/q}) \in \HF(-Y_{p/q})$ is  non-zero for all $p/q \geq 2g$.    % the manifold obtained by $p/q$ surgery on $K$,  and  $\xi_{p/q}$ be the contact structure on $Y_{p/q}$ induced by the resulting rational open book.  Suppose  $c(\xi) \in \HF(-Y)$ is non-zero. Then $c(\xi_{p/q}) \in \HF(-Y_{p/q})$ is  non-zero for all $p/q \geq 2g$.   
\end{theorem}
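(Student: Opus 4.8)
The plan is to reduce the statement, via Theorem~\ref{cont-inv}, to the behavior of the knot filtration of $K_{p/q}\subset Y_{p/q}$, and then to extract what is needed from the surgery formula for the knot Floer homology of the core of the surgery torus together with the fiberedness of $K$. By Theorem~\ref{cont-inv}, $c(\xi_{p/q})$ is the image $\iota_*(c)$ of the generator $c$ of $H_*(\F(\mathrm{bottom}))$ under the inclusion of the lowest nontrivial subcomplex of the $\Z$-filtration of $\CFa(-Y_{p/q})$ induced by $K_{p/q}$, the filtration taken with respect to the relative class carried by the fiber $F$ (equivalently, the page of the associated rational open book). So it suffices to identify this subcomplex together with the inclusion-induced map on homology, and to check that the generator survives to a nonzero class in $\HFa(-Y_{p/q})$.

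The crux is that, since the complement of $K_{p/q}$ is canonically the complement of $K$ and $p/q\ge 2g$ puts us in the large-surgery range, the surgery formula for the core knot should identify the filtered complex $(\CFa(-Y_{p/q}),\F_{K_{p/q}})$, restricted to the $\Spinc$ structure carrying $c(\xi_{p/q})$, with the large-surgery subquotient $C\{\max(i,j-s)=0\}$ of the knot Floer complex $C=CFK^\infty(Y,K)$ at the extremal value of the parameter $s$, filtered by the appropriate coordinate function. For $s$ extremal and $p/q\ge 2g$ that subquotient has no ``diagonal'' part and collapses to the vertical column $C\{i=0\}$, which is exactly $\CFa(-Y)$ with its $K$-knot filtration $\F_K$. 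Under this identification the lowest subcomplex for $K_{p/q}$ becomes $\F_K(\mathrm{bottom})$, namely (up to an overall index shift) the extremal Alexander-graded summand of the knot Floer complex of $K$; since $K$ is fibered of genus $g$ its knot Floer homology in that grading is one-dimensional, in agreement with Theorem~\ref{cont-inv}.

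Granting this identification, the inclusion $\F_{K_{p/q}}(\mathrm{bottom})\hookrightarrow\CFa(-Y_{p/q})$ becomes the inclusion $\F_K(\mathrm{bottom})\hookrightarrow\CFa(-Y)$; by Theorem~\ref{cont-inv} applied to the honest open book determined by the fibered knot $K$ (equivalently, by \ons's original definition of the contact invariant), this carries the generator to $c(\xi)\ne0$. Therefore $c(\xi_{p/q})\ne0$, which is what we wanted.

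The heart of the matter --- and the step I expect to require genuine work --- is the second paragraph: proving that the $K_{p/q}$-knot filtration on $\CFa(-Y_{p/q})$ matches the coordinate filtration on the large-surgery subquotient of $CFK^\infty(Y,K)$, and that for $p/q\ge 2g$ this subquotient collapses, in the extremal $\Spinc$ structure, to $C\{i=0\}=\CFa(-Y)$ with no correction terms. The $\Spinc$ and Alexander-grading bookkeeping needs care, as does the genuinely rational case $q>1$; one natural route is to handle $q=1$ first, where the \os\ large integer surgery formula applies directly, and then reduce the general slope $p/q$ to an integer surgery on a suitable cable of $K$ by means of \cite{BEV}, as in the proof of Theorem~\ref{cont-inv}.
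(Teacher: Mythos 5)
Your reduction via Theorem \ref{cont-inv}, and the strategy of your second paragraph---identify the $K_{p/q}$-filtration on $\CFa(-Y_{p/q})$ with a filtration on a large-surgery subquotient of $CFK^{\infty}(Y,K)$ and recognize the bottom inclusion as the corresponding map for $K$---is exactly the paper's route, but only for \emph{large integral} slopes (Theorem \ref{thm:nonvanishing}). There the relevant $\Spinc$ structure carries $C\{\max(i,j+g)=0\}\simeq C\{i\le 0,\, j=-g\}$, the induced filtration is the two-step one $0\subset C\{i<0,j=-g\}\subset C\{i\le 0,j=-g\}$ (not all of $\F_K$), and the identification with the inclusion for $K$ goes through a duality argument plus the horizontal--vertical flip $C\{j=-g\}\simeq C\{i=0\}$ of \cite[Proposition 3.8]{knotOS}. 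Your outline is consistent with this, but two steps you wave at are genuine gaps.

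First, the threshold $2g$. The surgery formula for the filtered complex of $\coren\subset\Yn$ (Theorem \ref{thm:core2}) is only established for $n\gg 0$; nothing in your argument brings this down to $n\ge 2g$, and the paper notes this bound is sharp (e.g.\ for $(2,k)$ torus knots), so no soft ``large-surgery range'' appeal can work. The paper closes this gap (Theorem \ref{2g}) with a surgery exact triangle relating $\HFa(Y)$, $\HFK(\Yzero,\corezero)$ and $\HFK(\Yn,\coren)$, together with Eftekhary's adjunction inequality for longitude Floer homology, to show that the relevant groups and the two-step filtration stabilize once $n\ge 2g$. Some argument of this kind is required before you may invoke the surgery formula at slope exactly $2g$.

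Second, the rational case $q>1$. Your proposed reduction---realize $p/q$-surgery on $K$ as an integral surgery on a cable of $K$ via \cite{BEV}---does not work: integral surgery on a cable of $K\subset Y$ yields $Y_{p/q}(K)$ only for special slopes (and then typically with a lens-space summand), and the cabling theorem of \cite{BEV} concerns cables of the binding $K_{p/q}$ \emph{inside} $Y_{p/q}$; it is the tool behind Theorem \ref{cont-inv}, not a device for converting rational surgeries into integral ones. The paper instead proves (Lemma \ref{n-to-p/q}) that $(Y_{p/q},\xi_{p/q})$ is obtained from $(Y_n,\xi_n)$ by Legendrian surgery on leaves of pre-Lagrangian tori in a collar of the binding, using Honda's classification of tight contact structures on $T^2\times[0,1]$ and the Farey tessellation, and then concludes by naturality of $c$ under Legendrian surgery. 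You need either this geometric step or a direct analysis of the rational surgery formula for the core's knot filtration; as written, the passage from integral to rational slopes is missing.
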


Note that surgeries with sufficiently negative framings can be realized as Legendrian surgeries. If  $(Y,\xi)$ has non-trivial contact invariant, so will any contact structure obtained  by Legendrian surgery, regardless of fibering.  For this reason, producing tight contact structures on positive Dehn surgeries is typically more challenging, and explains our focus on the realm of positive slope.  We should point out, however, that our results have analogues for negative slopes which can be used to produce contact structures  with non-trivial  invariants, even in situations where the slope is larger than the maximal Thurston-Bennequin invariant.

Theorem \ref{p/q-surgery} allows to construct a number of interesting tight contact structures. First, notice that
 surgeries on the binding of an open book with trivial monodromy produce rational open book decompositions for  circle bundles over surfaces. 
Tight contact structures on circle bundles are completely classified (\cite{Ho2, Gi3}), but it is interesting to point out that an  existence result
follows immediately from Theorem \ref{p/q-surgery}: a circle bundle of Euler number $n\geq 2g$ over a surface of genus $g>0$ carries a tight contact structure with non-zero contact invariant.  To list some further families of contact manifolds whose tightness follows from Theorem \ref{p/q-surgery}, we turn to the supply of tight contact structures compatible with the genus one open books given in \cite{Ba1, Ba}. Indeed, 
tight contact structures supported by open books $(T, \phi)$ (where $T$ is a punctured torus)
are completely classified \cite{Ba1, HKM} in terms of their monodromy. All of these tight contact structures have non-vanishing contact invariants, 
so Theorem \ref{p/q-surgery} produces, for any  $p/q \geq  2$, tight contact structures on manifolds obtained by $p/q$-surgery on the bindings of corresponding open books. Many of these manifolds are L-spaces \cite{Ba} and thus carry no taut foliations \cite[Theorem 1.4]{genusOS}; 
the family of tight contact manifolds we obtain generalizes a result of Etg\"u \cite{Etg}. (Note that an expanded version of \cite{Etg} extends the 
results to a wider class of open books than the original arxiv version.)

Our results should also be contrasted to those of Lisca-Stipsicz \cite{LS}. They prove that for a knot $K\subset S^3$ 
whose maximal self-linking number equals  $2g(K)-1$, the surgered manifold $S_r^3(K)$ carries a tight contact structure for all $r \geq 2g(K)$.
While our theorem only applies to fibered knots, it can be used in arbitrary $3$-manifolds.  In particular, combining Theorem \ref{p/q-surgery} with \cite[Theorem 5]{He3} produces

\begin{cor}
Let  $K\subset Y$ be a fibered knot with fiber $F$, and $\xi$  a contact structure on $Y$ with $c(\xi)$ non-zero.  Assume  that  $K$ has a transverse representative in $\xi$ satisfying $$sl_F(K)=2g(F)-1.$$ Then $K_{p/q}\subset Y_{p/q}$ induces a contact structure $\xi_{p/q}$ with $c(\xi_{p/q})$ non-zero, for  $p/q\ge 2g(F)$.  
\end{cor}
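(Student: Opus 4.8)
The plan is to deduce the corollary directly from Theorem~\ref{p/q-surgery}, once we establish that the hypotheses force $c(\xi_K)\neq 0$, where $\xi_K$ denotes the contact structure supported by the \emph{honest} open book with binding $K$. The only nontrivial point is that $\xi$ --- about which we a priori know only that $c(\xi)\neq 0$ and that $K$ admits a transverse representative $T$ with $sl_F(T)=2g(F)-1$ --- carries the same Heegaard Floer contact invariant as $\xi_K$. This is exactly where \cite[Theorem~5]{He3} enters: applied to the fibered knot $K$ and the transverse representative $T$ realizing the maximal self-linking number, it identifies the transverse invariant of $T$ (the knot Floer class assigned in \cite{He3}) with the generator $c$ of $H_*(\F(\mathrm{bottom}))$ appearing in Theorem~\ref{cont-inv}, after matching orientation and filtration conventions.

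I would then combine this with naturality of the transverse invariant under the inclusion $\iota\colon \F(\mathrm{bottom})\to\CFa(-Y)$ of the lowest nontrivial subcomplex: the image of the transverse invariant of $T$ in $\HFa(-Y)$ is $c(\xi)$, while by Theorem~\ref{cont-inv} --- which in the honest fibered case recovers the original Ozsv\'ath--Szab\'o definition --- the image of $c$ is $c(\xi_K)$. Since these two classes of $H_*(\F(\mathrm{bottom}))$ coincide, so do their images, whence $c(\xi_K)=c(\xi)\neq 0$. Now Theorem~\ref{p/q-surgery}, applied to the fibered knot $K$ with $c(\xi_K)\neq 0$, gives $c(\xi_{p/q})\neq 0$ for every $p/q\geq 2g(F)$, where $\xi_{p/q}$ is the contact structure of the rational open book with binding $K_{p/q}\subset Y_{p/q}$. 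Since the complement of $K_{p/q}$ is the fibered complement of $K$, this $\xi_{p/q}$ is precisely the contact structure in the statement, and we are done.

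The step I expect to be the genuine obstacle is not conceptual but a careful reconciliation of conventions. One must check that the hypothesis $sl_F(T)=2g(F)-1$ in \cite{He3} matches the extremal filtration level $\F(\mathrm{bottom})$ used here (rather than $\F(\mathrm{top})$, which corresponds to reversing an orientation), and that the naturality statement for the transverse invariant is invoked with the correct identification of the Heegaard Floer homologies of $Y$ and $-Y$ and of $K$ and its reverse. Alternatively, one may quote \cite[Theorem~5]{He3} in the stronger form asserting directly that a contact structure with non-vanishing invariant admitting such a transverse representative of a fibered knot $K$ is isotopic to $\xi_K$; then the argument shortens to ``$\xi\simeq\xi_K$, hence apply Theorem~\ref{p/q-surgery},'' but one still must verify that the transverse-knot hypothesis of \cite{He3} is exactly the one assumed here, not merely a Bennequin-type bound on the self-linking number.
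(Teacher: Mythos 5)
Your proposal is correct and follows the paper's route exactly: the paper gives no argument beyond the sentence ``combining Theorem~\ref{p/q-surgery} with \cite[Theorem 5]{He3} produces'' the corollary, and the cited result is precisely the ``stronger form'' you mention at the end --- for a fibered knot $K$ in $(Y,\xi)$ with $c(\xi)\neq 0$, the existence of a transverse representative with $sl_F(K)=2g(F)-1$ is equivalent to $\xi$ being isotopic to $\xi_K$. So the short version of your argument ($\xi\simeq\xi_K$, hence $c(\xi_K)=c(\xi)\neq 0$, now apply Theorem~\ref{p/q-surgery}) is the intended proof, and the longer detour through the transverse invariant in your first paragraph is unnecessary.
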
   Our result overlaps with  \cite{LS} for fibered knots in $S^3$ with $sl(K)=2g(K)-1$, but  \cite{LS}  guarantees only the existence of a tight contact structure
whereas our result describes a specific supporting open book.

Our proof of Theorem \ref{p/q-surgery} consists of several parts. The first  is based on a detailed examination of the knot Floer homology of the induced knot $K_n\subset Y_n$ for sufficiently large integral surgeries, $n\in \Z$.  Building on work of \cite{He2,knotOS}, we give a complete description of the knot Floer homology filtration induced by $K_n\subset Y_n$ in terms of the filtration induced by $K\subset Y$.   Coupled with the description of the contact invariant given by Theorem \ref{cont-inv}, this proves the theorem for $n\gg 0$.  We then obtain the theorem for all integers $n\ge 2g$ by using an exact sequence for knot Floer homology together with an adjunction inequality.  It is worth pointing out that the restriction $n\ge 2g$ is, in general, sharp (this can be seen from the $(2,k)$ torus knot).    Finally, the theorem is proved for rational slopes $p/q\ge 2g$ by showing that $\xi_{p/q}$ is  obtained from $\xi_n$ by Legendrian surgery.  

%The case of rational surgeries can be reduced to integer surgeries by relating the corresponding open books via Legendrian surgeries.  

\bigskip

\noindent{\bf Outline:}  The paper is organized as follows. In Section 2, we discuss the Alexander grading in knot Floer homology, paying particular attention to the case of rationally null-homologous knots.  In particular, we discuss how to compute this grading with the help of so-called {\em \relative periodic domains}.

Section 3 is devoted the  proof of Theorem \ref{cont-inv}.   The proof relies on studying the relationship between the knot Floer homology of the binding of an open book and  that of its cables.  In this section we also produce an explicit Heegaard diagram for a rationally fibered knot with a unique generator for the lowest non-trivial filtered subcomplex in the knot filtration.  

In Section 4 we prove Theorem \ref{p/q-surgery}.  This section includes a detailed discussion of the relationship between the knot Floer homology of $K\subset Y$ and the  Floer homology of the induced knot $K_{p/q}\subset Y_{p/q}$.

\subsection*{Acknowledgements.} We are grateful to Jeremy Van Horn--Morris for many helpful conversations, to Andr{\'a}s Stipsicz and Paolo Lisca for their interest, and to John Etnyre for help with Lemma 
\ref{n-to-p/q}.   Much of this work was completed at the Mathematical Sciences Research Institute, during the program ``Homology Theories for Knots and Links", and at the Banff International Research Station during the workshop ``Interactions between contact symplectic topology and gauge theory in dimensions 3 and 4" in March, 2011.  We are very grateful for the wonderful environment provided by both institutions.  

\section{Rationally null-homologous knots and the Alexander grading} \label{sec:filtration}
 Let $K\subset Y$ be  knot.  We say that $K$ is {\em rationally null-homologous} if $[K]=0\in H_1(Y;\Q)$.  This implies that for some positive integer $p$, we have $p\cm [K]=0$ in  $H_1(Y;\Z)$, and that there exists a smooth, properly embedded surface $F\subset 
 Y\setminus \nbdK$ such that $[\partial F]=p\cm [K]$.  If $p$ is minimal, we call it the {\em order} of $K$, and refer to the aforementioned surface as a {\em rational Seifert surface} for $K$.   Finally,  we say that a rationally null-homologous knot is {\em rationally fibered} if $Y\setminus \nbdK$ fibers over the circle with fiber a rational Seifert surface.  In this section we discuss Alexander gradings in knot Floer homology, with an emphasis on the case of rationally null-homologous knots.   For such knots, an Alexander grading can be defined with the help of the relative homology class coming from a rational Seifert surface.
This Alexander grading can, in turn, be computed from a so-called {\em \relative periodic domain} which represents the homology class of the Seifert surface.

 Suppose that $K$ is a rationally null-homologous knot in $Y$, represented by a doubly-pointed Heegaard diagram $(\Sigma, \aalpha, \bbeta, w, z)$.
 The knot  induces a filtration of the chain complex $\CF(Y)$
by the partially-ordered set of relative $\Spinc$ structures  $\RelSpinC(Y,K)$ on the knot complement \cite[Section 2]{ratOS}. The partial ordering comes from the fact that  $\RelSpinC(Y,K)$ is an $H^2(Y\setminus \nbdK,\partial(Y\setminus \nbdK) )$--torsor, and this latter group can be endowed with a partial order (note  that there is no canonical  partial ordering on torsion cyclic summands in $H^2(Y\setminus \nbdK,\partial(Y\setminus \nbdK))$,  so we simply pick one).   The partial ordering restricts to a total ordering on the fibers of the natural filling map \cite[Section 2.2]{ratOS}:   
\begin{equation}\label{eq:filling} G_{Y,K}: \RelSpinC(Y,K)\longrightarrow \SpinC(Y),
\end{equation} 
where $G_{Y,K}^{-1}(\spinc)$ consists of relative $\SpinC$ structures which differ by a multiple of the Poincar{\'e} dual to the meridian   $\PD([\mu])$.

A relative homology class $\relhom\in H_2(Y\setminus \nbdK,\partial(Y\setminus \nbdK))$ allows us to collapse the partial order on $\RelSpinC$ to a total order.  Define  $\gr_\alpha: \RelSpinC(Y,K)\rightarrow\Z$ by
\begin{equation}
%\label{alex}
\gr_\alpha(\spincrel)= \frac12 \langle c_1(\spincrel)-\PD([\mu]), \alpha \rangle,
\end{equation}
where $c_1(\spincrel)\in H^2(Y\setminus \nbdK,\d (Y\setminus \nbdK)$ is the relative Chern class  of the orthogonal $2$-plane field to the relative $\SpinC$ structure, relative to a specific  trivialization on the boundary \cite[Page 627]{linksOS}.  This function gives $\RelSpinC$ a total order, and hence a total order on the set of generators for $\CF(Y)$, by the function $$\spinc_{z,w}(-): \Ta\cap\Tb\rightarrow \RelSpinC(Y,K).$$ 

For the purposes of knot Floer homology, the relevant  $\alpha\in H_2(Y\setminus \nbdK,\partial(Y\setminus \nbdK))$ is the class of a rational Seifert surface, $[F,\partial F]\in  H_2(Y\setminus \nbdK,\partial(Y\setminus \nbdK))$.    In this case, we refer to the function  
 \begin{equation}
\label{alex}
\gr_{[F,\partial F]}(\x)= \frac12 (\langle c_1(\s_{z,w}(\x)), [F, \d F] \rangle- [\mu] \cdot [F, \partial F]). 
\end{equation}
as the {\em Alexander grading}.  This depends on the choice of rational Seifert surface, but only through its relative homology class.  We will often drop this choice from the notation, letting $\gr(\x)$ denote the Alexander grading of a generator, defined with respect to an implicit choice of rational Seifert surface (when $b_1(Y)=0$ this choice is canonical).  The Alexander grading gives rise to a filtration $\F$ on $\CF(Y)$ in the standard way, i.e. we let $$
\F(s) = \underset{\{\x\in\Ta\cap\Tb\ |\ \gr(\x)\leq s\}}\bigoplus \FF<\x>, 
$$
denote the subgroup of $\CFa(Y)$ generated by intersection points with Alexander grading less than or equal to $s\in \Z$.  Positivity of intersections of $J$-holomorphic Whitney disks with the hypersurfaces determined by $z$ and $w$ ensures that $\Filt(s)$ is a subcomplex; that is, $\d \F(s)\subset \F(s)$ and hence $\F$ indeed defines a filtration.   The associated graded groups are the {\em knot Floer homology groups},  $$\HFK_*(Y,[F],K,i):=H_*\left(\frac{\F(i)}{\F(i-1)}\right).$$

The Alexander grading is slightly easier to study if $Y$ is a rational homology sphere \cite{Ni}. In this case, if 
$(Y, K)$ is represented by a doubly-pointed Heegaard diagram $(\Sigma, \aalpha, \bbeta, w, z)$, 
and $\x$, $\y$ are two generators of $\CF(Y)$, consider a  curve $a$ in $\T_{\alpha} \subset \Sym^g(\Sigma)$ 
connecting $\x$ to $\y$, and a  curve $b$ in $\T_{\beta} \subset \Sym^g(\Sigma)$ connecting $\y$ to $\x$.  
The union $a \cup b$ is a closed curve in $\Sym^g(\Sigma)$. Since $b_1(Y)=0$,
 a multiple  $k(a \cup b)$ bounds a Whitney disk $\phi$, and the filtration difference can be computed by means of 
 this Whitney disk. Indeed,
$$
\s_{w, z}(\x)- \s_{w, z}(\y)= \frac1k (n_z(\phi)- n_w(\phi)) \PD([\mu]),
$$
and this quantity is independent of $\phi$ \cite[Lemma 4.2]{Ni} (see also \cite[Lemma 3.11]{linksOS}. 

If $b_1(Y)>0$, some generators of $\CF(Y)$ may not be related by a Whitney disk, although the above formula still holds 
for $\x, \y$ such that their relative $\Spinc$-structures differ by a multiple of $\PD([\mu])$; this is always the case if $\spinc_{z,w}(\x)$ and $\spinc_{z,w}(\y)$ are in the same fiber of the filling map \eqref{eq:filling}.  To understand the Alexander grading in the absence of rational Whitney disks we will use ``\relative periodic domains" to evaluate the 
grading difference between two generators.  

Let $K\subset Y$ be a  knot, 
and let $(\Sigma, \aalpha, \bbeta, z, w)$ be a Heegaard diagram for $(Y, K)$. 
Connect $z$ to $w$ by an arc $l_1$ in $\Sigma$ disjoint from the $\alpha$-curves, 
and $w$ to $z$ by an arc $l_2$ in $\Sigma$ disjoint  from the $\beta$-curves.  The union $\lambda = l_1 \cup l_2$, when pushed into the respective handlebodies, is a longitude for $K$. We will always 
consider Heegaard diagrams where such a longitude is fixed for the given knot.    
%As usual, $l_1$ can be pushed into the $\alpha$-handlebody, $l_2$ into 
%the $\beta$-handlebody, and the union of the two pushed-off arcs in $Y$ forms a knot 
%isotopic to $K$. By stabilizing along either arc, we can  assume that $l_1$ and $l_2$ do not intersect in $\Sigma$
%except at their endpoints, $z$ and $w$, but our constructions work regardless of this assumption. %(Indeed, $l_1$ and $l_2$ will intersect only at endpoints  
%if  $\beta_1=\mu$ is the meridian of $K$, $z$ and $w$ are close to $\mu$, and $l_1$ is the short arc from $z$ to $w$ 
%that intersects $\mu$ once.)  

\begin{defn} Let $K\subset Y$ be a rationally null-homologous knot, and let $(\Sigma, \aalpha, \bbeta, z, w)$
be a Heegaard diagram for $(Y, K)$ with a  longitude $\lambda$, as above.
Let $\D_1, \dots \D_r$ denote the closures of the components of 
$\Sigma \setminus ( \aalpha \cup \bbeta \cup \lambda)$. 
A {\em \relative periodic domain} is  a 2-chain $\P= \sum a_i D_i$, whose  boundary satisfies 
$$
\d \P = l \lambda + \sum_i n_i \alpha_i + \sum_i m_i\beta_i,
$$ for $l, n_i, m_i\in \Z$. 
\end{defn}

\begin{remark} Our definition is a generalization of the notion of periodic domain \cite[Definition 2.14]{threeOS}.  A periodic domain is a two chain, as above satisfying $l=0$ and  $n_w(\P)=0$. % We  drop this latter condition 
%since our purpose for \relative  periodic domains is to study  the relative Alexander grading.  This quantity depends on the difference of the multiplicities of $\P$ and is 
%not be affected by adding the homology class $[\Sigma]$ to $\P$.
\end{remark}

%Note that even for a null-homologous knot,  $\P$ is not a periodic domain  in $(\Sigma, \aalpha, \bbeta)$
%in the sense of \cite{threeOS}. Our abuse of terminology is justified by
%the relation between \relative periodic domains and (rational) Seifert surfaces, and the fact that the knot filtration 
%can be computed via the multiplicities of a domain $\P$ as above. 

A \relative periodic domain $\P$ naturally gives rise to a relative homology class $[\P]\in H_2(Y\setminus \nbdK,\d (Y\setminus \nbdK))$, in the same way that periodic domains give rise to homology classes in $H_2(Y)$. Indeed, a relative periodic domain is a $2$-chain whose boundary consists of a union of copies of $\lambda$ and complete $\alpha$- and $\beta$- curves.  Capping off the $\alpha$- and $\beta$- curves with the disks that they bound in their respective handlebodies, we arrive at a $2$-chain whose boundary lies on $\lambda$ or, up to homotopy, on $\partial (Y\setminus \nbdK)$.  In other words, we obtain a cycle in the relative chain group $C_2( Y\setminus \nbdK,\partial (Y\setminus \nbdK))$.  We denote the corresponding homology class by $[\P]$. In fact, the correspondence is reversible; that is, any relative homology class comes about by capping off a relative periodic domain.  Since we have no need for this fact we leave the details (a standard Mayer-Vietoris argument) to the reader.

The Alexander grading is defined in terms of the relative homology class of a rational Seifert surface.  Thus our primary interest lies in those relative periodic domains whose homology class agrees with some specific rational Seifert surface $F$.  To this end, observe that if $K$ has order $p$, then $\partial F$ will wrap $p$ times around $K$.  Thus for a relative periodic domain $\P$ whose homology class agrees with $F$, the longitude $\lambda$ will appear with multiplicity $p$ in $\d \P$.  

%Given a \relative periodic domain  $\P$, we construct a relative homology class $H(\P)$ in the knot complement as in \cite[Lemma 2.17]{threeOS}:
%we glue together a number of copies of the domains in $\Sigma$ in the complement of the $\alpha$- and $\beta$- curves, 
%as dictated by the multiplicities of $\P$ in these domains. The result will be a surface with boundary given by a number of 
%complete $\alpha$- and $\beta$-curves, as well as $\lambda$. We then attach disks  to the $\alpha$- and $\beta$-curves, 
%to obtain a surface in $Y$ whose boundary is a multiple of $\lambda$. Conversely, given a rational Seifert surface $F$ for $K$, 
%we can follow the strategy of \cite[Lemma 7.3]{propOS} to find a Heegaard diagram containing a periodic domain 
%representing $[F, \d F]$ in the above sense, and such that all the multiplicities of the periodic domain are non-negative.   

The following lemma shows that the relative Alexander grading difference between generators $\x,\y$ is determined by the multiplicities of $\P$.  

%\begin{lemma} \label{lm1} Let $K\subset Y$ be a null-homologous knot, $\P$ a periodic domain for $K$ in the Heegaard diagram 
%$(\Sigma, \aalpha, \bbeta)$. 
%Then the relative Alexander grading on $\CF(\Sigma, \aalpha, \bbeta)$ is given by 
%$$
%\F(\x)-\F(\y)=n_{\x}(\P)-n_{\y}(\P).
%$$
%\end{lemma}
%\begin{proof} This follows from the fact that after a stabilization, the periodic domain for $K$ can be related to 
%an honest periodic domain in $\Yzero$ representing a capped-off Seifert surface.
%Is there a reference, or should a complete proof be included?
%\end{proof}

\begin{lemma} \label{periodic}
Let $K \subset Y$ be a rationally null-homologous knot and  $\P$ be a \relative periodic domain whose homology class equals that of a fixed rational Seifert surface $F$.  Let $\x, \y\in \T_{\alpha} \cap \T_{\beta}$.   Then 
$$
\gr(\x)-\gr(\y)=  n_{\x}(\P) -n_{\y}(\P),
$$
where $\gr$ is the Alexander grading  with respect to $F$, defined by Equation \eqref{alex}.
\end{lemma}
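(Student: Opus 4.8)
The plan is to reduce the statement to a computation with the first Chern class evaluated on the relative homology class of the Seifert surface, and then show that the $J$-holomorphic disk count organizing the Alexander grading is governed by the local multiplicities $n_{\x}(\P), n_{\y}(\P)$ of the relative periodic domain $\P$. First I would recall that, by Equation \eqref{alex},
$$\gr(\x)-\gr(\y) = \tfrac12\langle c_1(\s_{z,w}(\x)) - c_1(\s_{z,w}(\y)), [F,\partial F]\rangle,$$
since the $[\mu]\cdot[F,\partial F]$ term is common to both. The difference $c_1(\s_{z,w}(\x)) - c_1(\s_{z,w}(\y))$ is $\PD$ of a homology class supported by $\aalpha\cup\bbeta\cup\lambda$ representing the difference of the relative $\Spinc$ structures; the standard model is that, choosing paths $a\subset\Ta$ from $\x$ to $\y$ and $b\subset\Tb$ from $\y$ to $\x$, one has $\s_{z,w}(\x)-\s_{z,w}(\y) = \PD$ of the $1$-cycle $\epsilon(\x,\y)$ obtained from $a\cup b$ (a refinement of \cite[Lemma 2.19]{threeOS} to the relative setting, as in \cite[Section 2]{ratOS}).

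The key step is then to express $\langle c_1(\s_{z,w}(\x)) - c_1(\s_{z,w}(\y)), [F,\partial F]\rangle$ as an intersection number between the $1$-cycle $\epsilon(\x,\y)$ and the relative periodic domain $\P$ representing $[F,\partial F]$. Since $\partial\P = p\lambda + \sum n_i\alpha_i + \sum m_i\beta_i$ lies entirely on the curves, one can evaluate the pairing by counting, with multiplicity $a_i$, how $\epsilon(\x,\y)$ meets the regions $D_i$; the boundary contributions along the $\alpha$- and $\beta$-curves cancel because $a\subset\Ta$ is disjoint from the $\alpha$-curves only away from $\x,\y$ and similarly for $b$, and the net effect is that the pairing equals $2(n_{\x}(\P) - n_{\y}(\P))$. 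This is precisely the mechanism behind the formula $\s_{w,z}(\x)-\s_{w,z}(\y) = \tfrac1k(n_z(\phi)-n_w(\phi))\PD([\mu])$ quoted in the excerpt from \cite[Lemma 4.2]{Ni}: when $b_1(Y)=0$ a multiple of $a\cup b$ bounds a Whitney disk $\phi$, and $n_z(\phi) - n_w(\phi)$ is computed by intersecting $\phi$ with $\P$, which in turn is read off from the multiplicities of $\P$ at the corner points $\x$ and $\y$. I would carry this out by the local model near a generator: the four regions meeting at a coordinate of $\x$ have multiplicities in $\P$ whose alternating sum records the crossing of $\lambda$ (or of $\alpha/\beta$ curves), and summing over the $g$ coordinates of $\x$ produces $n_{\x}(\P)$, with the $\PD([\mu])$ coefficient replaced by the $[F,\partial F]$-pairing because $\P$ represents $F$ rather than a meridian disk.

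The main obstacle I expect is handling the case $b_1(Y)>0$, where $\x$ and $\y$ need not be joined by any Whitney disk, so the argument cannot simply cite \cite{Ni}. Here I would argue directly with the relative $\Spinc$ difference cocycle $\epsilon(\x,\y)$ rather than with a disk: the class $[F,\partial F]$ still pairs with $\epsilon(\x,\y)$, and this pairing is computed combinatorially on the Heegaard surface by the multiplicities of $\P$, independent of whether $\epsilon(\x,\y)$ is a multiple of $\PD([\mu])$. The one subtlety is orientations and the factor of $2$ in \eqref{alex}: one must check the local intersection signs are consistent so that $\tfrac12\langle\cdot,\cdot\rangle$ comes out to exactly $n_{\x}(\P)-n_{\y}(\P)$ with no extra factor; this is a sign-bookkeeping matter that follows by comparing with the known case $p=1$ (honest Seifert surface, \cite[Lemma 3.11]{linksOS}) and with the meridian case, where both sides are understood. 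Everything else is routine once the identification of the Chern-class pairing with the domain multiplicities is in place.
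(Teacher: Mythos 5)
Your proposal follows essentially the same route as the paper: reduce to $\langle \s_{w,z}(\x)-\s_{w,z}(\y), [F,\partial F]\rangle$ (the factor of $\frac12$ being absorbed because $c_1$ of the difference is twice the difference class), identify $\s_{w,z}(\x)-\s_{w,z}(\y)$ with $\PD(\epsilon(\x,\y))$ via \cite[Lemma 3.11]{linksOS}, and evaluate the pairing against the capped-off relative periodic domain by reading off multiplicities, with the ambiguity at coordinates lying on $\partial \P$ cancelling in the difference because each $\alpha$- and $\beta$-curve carries exactly one coordinate of $\x$ and one of $\y$. The one substantive correction to your local model: the paper realizes $\epsilon(\x,\y)$ concretely as the union of gradient trajectories $\gamma_{\x}-\gamma_{\y}$ passing through the coordinates of the generators, so that each interior coordinate $x_i$ contributes its local multiplicity $\bar{n}_{x_i}(\P)$ (the value of $\P$ on the regions adjacent to $x_i$, not an ``alternating sum'' of the four quadrants, which would vanish) to the intersection number with $F$; with that fix, and no need for any Whitney disk, your sketch is the paper's argument.
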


\begin{proof} 
Recalling the definition of  $\gr$, we need to evaluate the quantity
$$
\frac12(\langle c_1(\s_{w,z}(x)), [F, \d F] \rangle - \langle c_1(\s_{w,z}(y)), [F, \d F] \rangle)
= \langle \s_{w,z} (\x) - \s_{w,z} (\y), [F, \d F] \rangle.
$$

By \cite[Lemma 3.11]{linksOS}
$$
\s_{w,z} (\x) - \s_{w,z} (\y) = \PD( \epsilon(\x, \y))= \PD([\gamma_{\x}-\gamma_{\y}]),
$$
where $\gamma_{\x}$ is the union of gradient trajectories connecting index 1 and index 2 critical points of the Morse function
which pass through the coordinates $x_i$ of $\x=(x_1, \dots x_g)$, and $\gamma_{\y}$ is a similar union of gradient trajectories passing through the
coordinates of $\y$.
Therefore, it suffices to calculate the intersection number of the closed curve $\gamma_{\x}-\gamma_{\y}$ with the surface $F$.
To this end, recall that the homology class of $[F, \d F]$ is 
 constructed from the periodic domain $\P$ by capping off any $\alpha$- and $\beta$- curves appearing in $\partial \P$ (with multiplicity)  with the compressing disks bounded by the  curve in the corresponding handlebody.

If $x_i\in \x$ (resp. $y_i\in \y$) lies in the interior of $\P \subset \Sigma$,  
then the intersection of $F$ with $\gamma_{\x}$ (resp. $\gamma_{\y}$) equals  the multiplicity $\bar{n}_{\x_i}(\P)$ (resp. $\bar{n}_{\y_i}(\P)$). 
If $x_i \in \partial \P$ then it doesn't contribute to the intersection number, as the surface can be perturbed so that the  
compressing disk for the corresponding $\alpha$- or $\beta$-curve is replaced by a normal translate which is disjoint from $\gamma_{\x}$. 
It remains to observe that contributions from such boundary points cancel in the expression  $n_{\x}(\P) -n_{\y}(\P)$, since every $\alpha$-curve 
and every $\beta$-curve contains exactly one coordinate of $\x$ and exactly one coordinate of $\y$.
\end{proof}

\section{The contact invariant for rational open books}

Let $K\subset Y$ be a rationally fibered knot.  Such a knot induces a rational open book decomposition and, subsequently, a contact structure $\xi_K$ \cite{BEV}.  The purpose of this section is understand the \os \ contact invariant of $\xi_K$ in terms of the knot Floer homology of $K$.  More precisely,  the ``bottom" filtered subcomplex in the filtration of $\CFa(-Y)$ induced by $K$ has  homology $\FF$ (this can be seen in many ways, and follows from both Propositions \ref{cable} and \ref{intersection-pt} below).  The main result of this section, Theorem \ref{hfkbottom}, shows that
$$  c(\xi_K)=  \mathrm{Im} (\iota_*: H_*(\F(\mathrm{bottom}))\rightarrow \HFa_*(-Y) ).$$
That is, the contact invariant of  $\xi_K$ is the image of the generator of the  homology of the bottom filtered subcomplex in the Floer homology of $-Y$, under the natural inclusion-induced map. When $K$ is fibered in the traditional sense this is simply \ons's definition \cite[Definition 1.2]{contOS}.

We prove Theorem \ref{hfkbottom} by considering an honest open book which results from an appropriate cabling of $K$.  Let $K_{P,Pn+1}$ denote the $(P,Pn+1)$ cable of $K$.  It is clear  that $[K_{P,Pn+1}]=P\cm[K]\in H_1(Y;\Z)$.  Thus for $P$ equal to the order of $K$, the cables will be null-homologous.  Moreover, such cables are fibered in the traditional sense, provided that $K$ is rationally fibered.  When $P,n>0$, it follows from \cite[Theorem 1.8]{BEV} that $\xi_{K}$ is isotopic to $\xi_{K_{P,Pn+1}}$. 
 The theorem will follow by understanding the relation between the knot Floer complex  of a given knot and its cable.  This is accomplished by Proposition \ref{cable}, which generalizes the cabling result of \cite{He}.   While our results show that $H_*(\F(\mathrm{bottom}))\cong\FF$, we  conclude the section by constructing an explicit Heegaard diagram adapted to a rational open book decomposition for which this  group is represented by a complex with a single generator.  With the plan in place, we begin.

 \subsection{The contact invariant and cabling}

%In this section, we prove Theorem \ref{hfkbottom}. To do this, we consider an honest open book which is a resolution of the given rational one, 
%i.e.  we will replace the binding of the rational open book by its appropriate cable. (We will consider $(P, Pn+1)$-cables for $n$ large.)  Since the two open books induce the same contact structure, 
 %the theorem will follow from the relation between the knot Floer complex  of a given knot and its cable.  The following proposition generalizes the cabling result of \cite{He} and plays the key role in our proof. 
 
 In this subsection we prove Theorem \ref{hfkbottom}. The key tool is Proposition \ref{cable}, which establishes a relationship between the Floer complex of a rationally null-homologous knot and that of its sufficiently positive cables.
 
The result states that the knot Floer homology groups of a knot and its sufficiently positive cables are equal in the ``topmost"  Alexander gradings .  To make this precise, recall that the Alexander grading depends on a choice; namely, the relative homology class of a rational Seifert surface (Equation \eqref{alex}). To specify how this choice is made, fix a rational Seifert surface $F$  for
 the knot $K$.   We construct a rational Seifert surface $F'$ for the cable $K'= K_{P, Pn+1}$  as follows. If $K$ has order $p$ in $H_1(Y)$, 
 then $F$ intersects   $\d \nbdK$  in a curve $s$ that wraps $p$ times around the longitude.
 The cable $K'$ has order $p'=p/\mathrm{gcd}(P,p)$.  Thus a rational Seifert surface $F'$ for $K'$ must meet $\nbdKprime$ in a curve $s'$ that is null-homologous in 
 $Y\setminus \nbdKprime$ and wraps $p'$ times around the longitude.  We can assume that the  neighborhood of the cable is contained inside that of the knot, $\nbdKprime\subset \nbdK$.  To construct $F'$, we take $R=P/\mathrm{gcd}(P,p)$ parallel copies of $F$, and glue them to an oriented properly embedded surface in $\nbdK\setminus \nbdKprime$ whose boundary consists of $s'$ and $R$ parallel copies of $s$.

  % Express  $p'=M/P$ for some $M$. To construct $F'$, we take $R=M/p$ parallel copies of $F$, and glue them to an oriented properly embedded surface in $\nbdK\setminus \nbd(K')$ whose boundary consists of $s'$ and $R$ parallel copies of $s$.     

\begin{prop} \label{cable} 
Let $K \subset Y$ be a rationally null-homologous knot, and $K'=K_{P, Pn+1}$ its $(P, Pn+1)$-cable. Fix a rational Seifert surface $F$ 
for $K$, and consider the corresponding rational Seifert surface $F'$ for $K'$, described above. 
Then for all $n\gg0$ sufficiently large, we have
$$
\HFK_*(Y, [F], K, \mathrm{top})= \HFK_*(Y, [F'], K', \mathrm{top}).
$$
\end{prop}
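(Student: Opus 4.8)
The plan is to reduce the statement to a comparison of the "topmost" pieces of two doubly-pointed Heegaard diagrams, one adapted to $K$ and one to its cable $K'=K_{P,Pn+1}$, following the strategy of \cite{He}. First I would recall the construction of a Heegaard diagram for a cable knot: starting from a doubly-pointed diagram $(\Sigma,\aalpha,\bbeta,w,z)$ for $(Y,K)$ in which the longitude $\lambda$ is realized by a pair of arcs $l_1\cup l_2$ as described above, one builds a diagram for $(Y,K')$ by attaching a standard planar piece inside $\nu(K)\setminus\nu(K')$ that records the $(P,Pn+1)$-cabling pattern. Concretely, I would add $P-1$ new $\alpha$-curves and $P-1$ new $\beta$-curves realizing the cable torus, together with the new basepoints $w',z'$ placed so that the winding number $n$ appears as the number of times a certain region wraps around. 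The key point, exactly as in \cite{He}, is that for $n\gg 0$ the intersection points of $\Ta\cap\Tb$ split according to which "winding region" their coordinates lie in, and the generators of maximal Alexander grading for $K'$ are in canonical bijection with the generators of maximal Alexander grading for $K$, tensored with a distinguished top generator of the local cabling picture.

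The next step is to pin down what "top" means on both sides using the relative-periodic-domain description from Section~\ref{sec:filtration}. I would exhibit a relative periodic domain $\P$ in the cable diagram whose homology class is the chosen $F'$ (the surface built from $R=P/\gcd(P,p)$ parallel copies of $F$ glued to the annular piece in $\nu(K)\setminus\nu(K')$), and relate it to a relative periodic domain $\P_0$ for $F$ in the original diagram. By Lemma~\ref{periodic}, Alexander-grading differences are computed by the multiplicities $n_{\x}(\P)$; so I would check that the multiplicities of $\P$ on the top generators of the cable diagram are obtained from those of $\P_0$ by adding the constant contribution of the local cabling region, which means the ordering of generators by Alexander grading in the top range is preserved under the bijection. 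This identifies $\F'(\mathrm{top})$ with $\F(\mathrm{top})$ as graded groups.

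Finally I would promote this identification of top graded pieces from groups to the level of the differential. Here one uses the standard "no holomorphic disks can raise the $w$- or $z$- multiplicity" positivity argument together with an energy/area or winding-number estimate: for $n$ large, any Whitney disk connecting two top generators that contributes to $\widehat\partial$ in the associated graded complex $\F'(\mathrm{top})/\F'(\mathrm{top}-1)$ must avoid the winding region, hence is supported in (a stabilization of) the original diagram and corresponds to a disk counted in $\HFK_*(Y,[F],K,\mathrm{top})$; conversely every such disk persists. This gives a chain isomorphism between the two topmost associated graded complexes and hence the claimed isomorphism on homology. The main obstacle I expect is exactly this last step: making the winding-number/area estimate precise enough to guarantee that for $n\gg 0$ the relevant moduli spaces in the top Alexander grading of the cable diagram are \emph{identical} (not merely isomorphic in homology) to those of the knot diagram, i.e.\ controlling the holomorphic curves near the cabling region. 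This is the technical heart, and it is where the techniques of \cite{He} (adapted from the winding-region analysis of \cite{knotOS}) do the real work; the homological bookkeeping with relative periodic domains and Lemma~\ref{periodic} is comparatively routine.
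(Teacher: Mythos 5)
Your overall strategy---locate the top Alexander grading on both sides with relative periodic domains and Lemma~\ref{periodic}, then match generators and differentials via a winding-region analysis for $n\gg 0$---is the same as the paper's, but your Heegaard diagram construction is not, and the difference is where your argument develops a genuine gap. The paper does \emph{not} stabilize: following \cite{He}, it keeps the original diagram $(\Sigma,\aalpha,\bbeta,w)$ for $Y$, replaces the longitude by the $n$-framed longitude $\lambda=\lambda_0+n\beta$, performs a $P$-fold finger move of the meridian $\beta$ along $\lambda$ (an isotopy, adding no curves), and then realizes $K'$ by moving the second basepoint to $z'$ while $K$ is still visible via a third basepoint $t$ (Figure~\ref{cable-pic}). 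Because $K$ and $K'$ now sit on the \emph{same} underlying diagram with the same $\Ta\cap\Tb$, the multiplicity computation with the two relative periodic domains $\P$ and $\P'$ shows the top generators are literally the same set $x_0\times C=x_0\times C'$, and the top-graded differentials both count disks with $n_w=n_t=n_{z'}=0$, so the two top complexes are \emph{equal}, not merely quasi-isomorphic. No neck-stretching, area estimate, or comparison of moduli spaces across a stabilization is needed.

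By contrast, your proposed diagram (``attach a planar piece in $\nbdK\setminus\nbdKprime$ and add $P-1$ new $\alpha$- and $\beta$-curves'') is not justified as a diagram for the $(P,Pn+1)$-cable, and even granting some such stabilized diagram, the step you yourself flag as the technical heart---showing that for $n\gg 0$ the holomorphic disks contributing to the top associated graded differential of the cable diagram coincide with those of the original diagram---is left entirely open. That is precisely the content one would have to prove, and it is not routine in a stabilized diagram: positivity at the basepoints constrains domains but does not by itself localize holomorphic curves away from the added handles, and a ``tensor with a distinguished local top generator'' description of the top generators is an assertion about the pattern/companion splitting that requires its own argument (of bordered-Floer or degeneration type). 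So as written the proposal has a gap at its final step; the fix is to use the finger-move construction, which dissolves that step entirely.
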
  

\begin{remark} Disregarding gradings, an isomorphism between the groups above can be shown for all $n$ using sutured manifold decomposition \cite[Corollary 5.9]{NiFibered} (see also \cite{JuhaszSurface}).  However, the strategy of our proof will be essential in our understanding of how the contact invariants of a rational open book and its cables are related.
\end{remark}

\begin{proof} For the case where $K$ is a knot in $S^3$, this statement was established in \cite{He} (see also \cite{Ni} for a generalization
to the case where $Y$ is a rational homology sphere). However, the proof from \cite{He} and \cite{Ni} use Whitney disks to compare the Alexander 
gradings of different generators of $\CF(Y)$; when $b_1(Y)>0$, this proof no longer works since there may be no Whitney disks.
Instead, we will use Lemma \ref{periodic} to compare the gradings in the Heegaard diagrams of \cite{He, Ni}. 

Let $F$ be a rational Seifert surface for $K$. We can find a 
 doubly-pointed Heegaard diagram $(\Sigma, \aalpha, \bbeta, w, z)$ for $(Y, K)$, together with  
a longitude  $\lambda_0$ on $\Sigma$ and a \relative periodic domain $\P$ representing $F$.  
It will be convenient to enumerate the $\alpha$- and $\beta$-curves by the index set $\{0, 1, \dots, g-1\}$, 
and to suppress the indices of $\alpha_0=\alpha$ and $\beta_0=\beta$.
We assume that $\lambda_0$ connects points $z$ and $w$, intersects $\beta_0$ 
at a single point, and is disjoint from all other $\beta$-curves, so that $\beta$ represents a meridian for $K$.
We also require that $\beta$ intersects $\alpha$ at a single point and is disjoint from the other $\alpha$-curves.
The \relative periodic domain $\P$ gives rise to the homological relation, 
\begin{equation} \label{=0}
p \lambda_0 + q \beta + \sum_{i=0}^{g-1} r_i \alpha_i + \sum_{i=1}^{g-1} q_i \beta_i=0.
\end{equation}
The multiplicities of $\P$ in the components of $\Sigma \setminus ( \aalpha  \cup \bbeta \cup \lambda_0)$ can be determined 
as follows:  pick a component $\D_0\in \Sigma \setminus ( \aalpha  \cup \bbeta \cup \lambda_0)$ and assign the multiplicity of $\P$ in $\D_0$ to be zero.  The multiplicity of $\P$ in any other component $\D_i$ is the algebraic intersection number $\# \gamma\cap \partial P$ of an oriented arc from $\D_i$ to $\D_0$ with the sum of curves in \eqref{=0}.  It is customary to fix the multiplicity 
of the component containing $w$ to be zero; however, in the argument below we find it convenient to fix the multiplicity of another component.

%if the multiplicity of one component is given, we can determine multiplicities of the other components by changing the multiplicity
%by the relevant coefficient of the identity (\ref{=0}) each time one of the curves $\alpha_i$, $\beta_i$ or $\lambda_0$ is crossed  
%(orientations of the curves and directions in which they are crossed affect the sign of the coefficients in the standard way). It is customary to fix the multiplicity 
%of the component containing $w$ to be zero; however, in the argument below we find it convenient to fix the multiplicity of another component. 

\begin{figure}[htb] 
	\labellist
	\small\hair 2pt
	\pinlabel $\beta$ at 93 112
	\pinlabel $\alpha$ at 48 138
	\pinlabel $\lambda_0$ at 135 66
	\pinlabel $t$ at 78 42
	\pinlabel $z'$ at 68 55
	\pinlabel $w$ at 63 87 
	\pinlabel $\lambda$ at 69 96
	\pinlabel $\Lambda$ at 111 41
	\pinlabel $x_0$ at 49 77
	\pinlabel $\alpha_i$ at 181 89
	\pinlabel $\alpha_j$ at 258 95
	%\pinlabel $p+1$  at 63 99
	%\pinlabel $p$  at 106 99
	%\pinlabel $1$  at 63 25
	%\pinlabel $2$  at 63 62
	\endlabellist
\centering	
\includegraphics[scale=1.5]{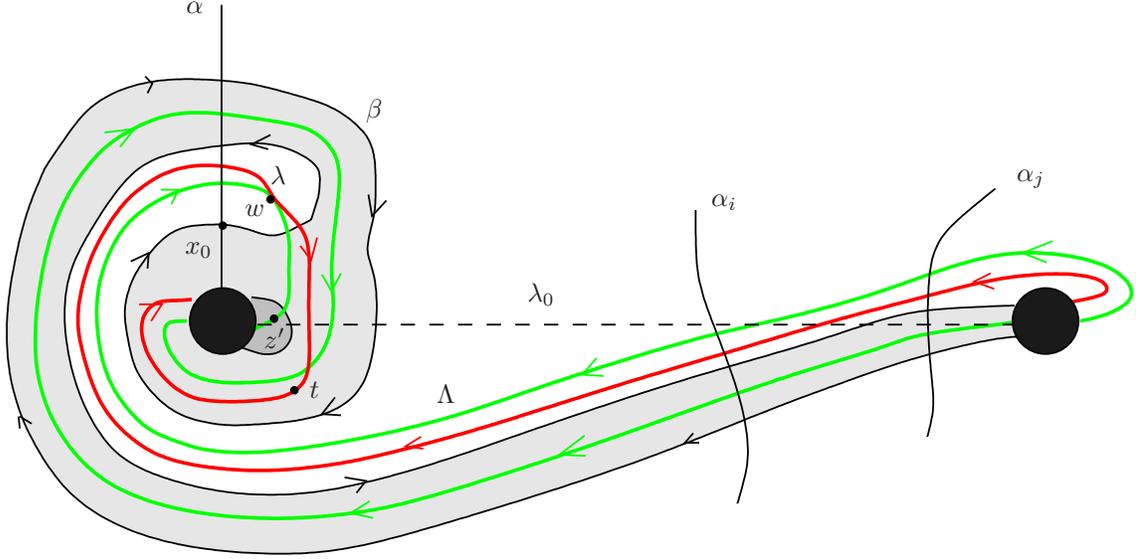}
\caption{A diagram for the $(P, Pn+1)$-cable of $K$.}
\label{cable-pic} 
\end{figure}

To construct a diagram for the $(P, Pn+1)$-cable of $K$, we first replace 
$\lambda$ by the $n$-framed longitude $\lambda=\lambda_0+n \beta$. Then, we perform a $P$-fold finger move of $\beta$ along $\lambda$, 
and replace the basepoint $z$ by $z'$ as in Figure \ref{cable-pic}. The diagram  $(\Sigma, \aalpha, \bbeta, w, z')$ now represents the cable $K'$.  The diagram also represents the original knot $K$ if we introduce another basepoint $t$, as in Figure \ref{cable-pic}. Notice that $\lambda$ can be now thought of as curve connecting $w$ to $t$ that intersects $\beta$ once and is disjoint from the 
other $\beta$'s. A longitude $\Lambda$ for the cable can be obtained by connecting $w$ and $z'$ in a similar fashion. 
We can rewrite (\ref{=0}) as 
\begin{equation} \label{la=0}
p \lambda  + (q -pn)\beta +  \sum_{i=0}^{g-1} r_i \alpha_i + \sum_{i=1}^{g-1} q_i \beta_i=0.
\end{equation}
This  relation gives rise to another  periodic domain $\P_n$ whose homology class equals $[F]$, and whose boundary  includes the new longitude $\lambda$.  We compute the multiplicities of this periodic domain as above.   If we pick the component $\D_0$ to be outside of the winding region (e.g.  the top right corner in  Figure \ref{cable-pic}) then it is clear  that the multiplicities of 
$\P_n$ are independent of $n$ outside of the winding region. Within the winding region, however, 
the multiplicities increase towards the center of the spiral formed by $\lambda$.  The finger move creates a number of parallel copies of $\beta$, and as one moves towards the center of the finger the multiplicities of $\P$ decrease. (An iterated finger, together with multiplicities of $\P$ in various regions, is shown in Figure 
\ref{iterated-finger}.)

\begin{figure}[htb] 
	\labellist
	\small\hair 2pt
	\pinlabel $0$ at 159 208
	\pinlabel $q-pn$ at 159 188
	\pinlabel $2(q-pn)$ at 159 170
	\pinlabel $3(q-pn)$ at 159 150
	\pinlabel $3(q-pn)+p$ at 159 130
	\pinlabel $2(q-pn)+p$ at 159 110
	\pinlabel $2(q-pn)+2p$ at 159 90
	\pinlabel $q-pn+2p$ at 159 72
	\pinlabel $q-pn+3p$ at 159 54
	\pinlabel $3p$ at 159 36
	\pinlabel $4p$ at 159 17 
	%\pinlabel $p+1$  at 63 99
	%\pinlabel $p$  at 106 99
	%\pinlabel $1$  at 63 25
	%\pinlabel $2$  at 63 62
	\endlabellist
\centering	
\includegraphics[scale=1.0]{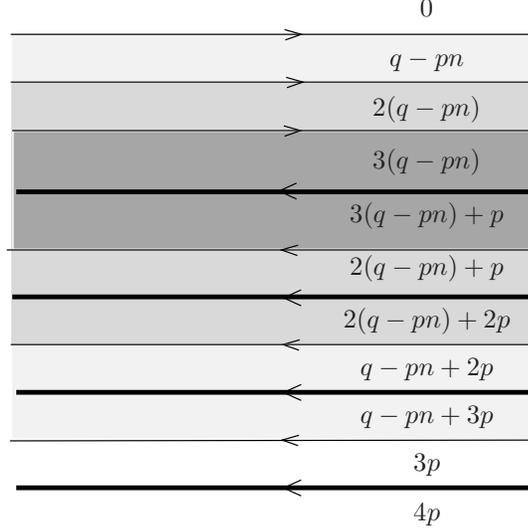}
\caption{A 3-fold finger and the multiplicities of  the periodic domain $\P_n$ given by equation \ref{la=0}.
 The thinner curve is the meridian $\beta$, the thicker curve is the longitude $\lambda$. When $n$ is large,
 the multiplicities inside the finger are smaller than the multiplicities outside, and  the multiplicities  increase as we move towards the center 
 of the $\lambda$-spiral in Figure \ref{cable-pic}.} 
\label{iterated-finger} 
\end{figure}

These considerations show that of the intersection points of $\alpha$ and $\beta$, 
the point $x_0$ (shown in Figure \ref{cable-pic}) has the highest multiplicity, and that  
the generators of $\CFK(Y, K)$ with the 
highest multiplicities are given by the set $x_0 \times C$, where $C$ is the set of $(g-1)$-tuples of intersection points of 
$\alpha_1, \dots \alpha_{g-1}$ and $\beta_1, \dots, \beta_{g-1}$ that have the highest multiplicities among all such $(g-1)$-tuples. 
%Of course, since we are considering a Heegaard diagram representing $K$ where 
%the finger move can be undone, we know that the top filtration level should be represented, in the diagram where $\beta$ meets $\alpha$ at a single point $c_0$, by the set $c_0 \times C$. However, it is helpful to identify the top filtration generators explicitly after the finger move.

To understand the Alexander gradings for the cable, $K'$, we must find a \relative  periodic domain representing $F'$ in the same diagram.  We now turn our attention to this task.

 Consider the cable $K'$ and its longitude $\Lambda$. The curve $\Lambda$ is homologous to $P(\lambda_0+n \beta) +\beta= P \lambda +\beta$. 
 We  have the null-homology
 $$
 p' \Lambda + (Rq-Rp-p') \beta + R(\sum_{i=0}^{g-1} r_i \alpha_i + \sum_{i=1}^{g-1} q_i \beta_i)=0,
 $$ 
which gives the rational periodic domain $\P'$ for $K'$.  It is clear that
that $[\P']=[F']\in H_2(Y\setminus \nbdKprime, \d (Y\setminus \nbdKprime))$.   

Now we look for generators with highest multiplicities with respect to $\P'$. As before, outside of the winding region these multiplicities are 
independent of $n$.  Moreover, we have
 \begin{equation} \label{Ptimes}
 \bar{n}_{x}(\P') = R \bar{n}_{x}(\P).
 \end{equation}
 
 For the intersection points with one coordinate on $\beta$, the above relation no longer holds, but the multiplicities of $\P'$ behave 
 similarly to the multiplicities of $\P$, increasing towards the center of the winding region and decreasing towards the center of the finger.
It follows that the top filtration level of $\CFK(Y, K')$ is given by  $x_0 \times C'$, where $C'$ is defined, analogous to $C$,
 as the set of $(g-1)$-tuples with the highest multiplicity. 
 Moreover, equation (\ref{Ptimes}) shows that the set $C$ is {\em identical} to the set $C'$. 
  This identifies the generators in the top filtration levels 
 of $\CFK(Y, [F], K)$ and $\CFK(Y, [F'], K')$. To identify the homologies in the top grading level, observe that  the differentials 
 on  $\CFK(Y, [F], K, \mathrm{top})=\CFK(Y, [F'], K', \mathrm{top})$ must both count holomorphic Whitney disks with $n_w=n_t=n_{z'}=0$ (see \cite[Proof of Lemma 3.6]{He}) thus the chain complexes 
  $(\CFK(Y, [F], K, \mathrm{top}), \d)$ and $(\CFK(Y, [F'], K', \mathrm{top}), d)$ are the same.

\end{proof}

\subsection{Proof of Theorem \ref{cont-inv}}
\noindent Recall the statement of  Theorem \ref{cont-inv}.

\begin{theorem} \label{hfkbottom}  Let $K \subset Y$ be a rationally fibered knot with rational fiber $F$, and let $c(\xi_K)$ denote the contact invariant of the contact structure $\xi_K$ induced by the associated rational open book decomposition.  Then $H_*(\F(-Y, [F],K, \mathrm{bottom}))\cong \FF\cm\langle c\rangle$.  Moreover,  $c(\xi_K)=  \iota_*(c)$, where $$ \iota: \F(-Y,[F],K,\mathrm{bottom})\rightarrow \CFa(-Y),$$ is the inclusion map of the subcomplex.  \end{theorem}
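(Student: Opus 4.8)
The plan is to reduce the rational case to the classical (honest open book) case via the cabling result, Proposition \ref{cable}, and then invoke \ons's definition of the contact invariant together with the naturality of the knot Floer filtration under cabling established in \cite{BEV} and \cite{He}. First I would recall that for $K' = K_{P, Pn+1}$ with $P$ the order of $K$ and $n \gg 0$, the cable $K'$ is an honest fibered knot, so it induces a genuine open book decomposition of $Y$; by \cite[Definition 1.2]{contOS}, $H_*(\F(-Y, [F'], K', \mathrm{bottom})) \cong \FF$ and the image of its generator under the inclusion-induced map is exactly $c(\xi_{K'})$. By \cite[Theorem 1.8]{BEV}, since $P, n > 0$, the contact structures $\xi_K$ and $\xi_{K'}$ are isotopic, hence $c(\xi_K) = c(\xi_{K'})$.

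The core of the argument is then to transport the statement ``bottom subcomplex has homology $\FF$ and its image is the contact invariant'' from $K'$ to $K$. Here I would apply Proposition \ref{cable}, but dualized: working in the filtered complex $\CFa(-Y)$, the bottom filtration level of the filtration induced by $K$ corresponds, under the identification of Heegaard diagrams constructed in the proof of Proposition \ref{cable}, to the top filtration level of the complex $\CFK(Y, K)$ (reversing orientation flips top and bottom). The proof of Proposition \ref{cable} shows not merely an abstract isomorphism of the associated graded homologies in the extreme Alexander grading, but an actual identification of chain complexes: the generators in the top level of $\CFK(Y, [F], K)$ are $x_0 \times C$, those of $\CFK(Y, [F'], K', \mathrm{top})$ are $x_0 \times C'$ with $C = C'$, and the differentials agree because both count holomorphic disks with $n_w = n_t = n_{z'} = 0$. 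The same count of intersections with basepoints shows that the inclusion of the bottom subcomplex into $\CFa(-Y)$ is, up to the diffeomorphism of knot complements, the same map for $K$ and for $K'$ --- the extra basepoint $t$ (which distinguishes the cable diagram from the knot diagram) imposes no further constraint at the level of the inclusion-induced map into $\HFa(-Y)$ because one is forgetting $z'$ (or $t$) entirely when passing to $\CFa$. Therefore $H_*(\F(-Y, [F], K, \mathrm{bottom})) \cong \FF$ and the image of its generator equals the image of the generator for $K'$, which is $c(\xi_{K'}) = c(\xi_K)$.

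The main obstacle I anticipate is making precise the claim that ``the inclusion-induced maps agree'' for $K$ and its cable. Proposition \ref{cable} as stated only compares the associated graded homology in the top Alexander grading; to upgrade this to a statement about the inclusion $\iota_* \colon H_*(\F(\mathrm{bottom})) \to \HFa(-Y)$ one needs to check that no new differentials into or out of the bottom subcomplex appear that would alter the image, and that the identification of generators respects not just the top level but enough of the filtered structure near the bottom. Concretely, one must verify that a holomorphic disk from a generator outside the bottom level to a generator in the bottom level of $\CFa(-Y)$, in the $K$-diagram, has a counterpart in the $K'$-diagram with the same source and target, and conversely; this follows from the multiplicity computations in the proof of Proposition \ref{cable} (the regions near $x_0 \times C$ pick up the same local multiplicities of $\P$ and $\P'$, scaled by $R$, so disks crossing $w$ are obstructed identically in both diagrams), but spelling it out carefully is the delicate point. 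Once this is in hand, the result follows by combining the classical definition of $c(\xi)$ for honest open books \cite{contOS} with the cabling invariance of contact structures \cite{BEV}.
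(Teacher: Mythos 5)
Your proposal follows essentially the same route as the paper's proof: cable to the honest fibered knot $K'=K_{P,Pn+1}$, invoke \cite[Theorem 1.8]{BEV} to get $c(\xi_K)=c(\xi_{K'})$, use the \cite{contOS} definition for $K'$, and transport the statement back via the orientation-reversed version of Proposition \ref{cable} together with the observation that the inclusion into $\CFa(-Y)$ forgets the extra basepoint. The ``delicate point'' you flag is handled in the paper exactly as you suggest: the two bottom subcomplexes consist of the same generators inside the \emph{same} ambient complex $\CFa(-Y)$ (whose singly-pointed diagram is independent of which extra basepoint one uses), so they are literally equal as subcomplexes and the two inclusion maps coincide.
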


\begin{proof} Suppose $K$ has order $p$ in $H_1(Y)$. To establish the lemma, we will consider a cable $K'= K_{p,pn+1}$, with large $n>0$. 
Then $K'$ is a null-homologous fibered knot inducing an honest open book compatible with $\xi_{K}$ \cite[Theorem 1.8]{BEV}. (The page $F'$ of the open book for $K'$ 
can be constructed from $F$ by the procedure described before the statement of Proposition \ref{cable}, provided that we take a Thurston norm minimizing surface in $\nbdK\setminus \nbdKprime$ as the interpolating surface between  $K'$ and  $\partial F$).

Since $K'$ is null-homologous, the results of \cite{contOS} apply; thus   $H_*(\F(-Y, [F'],  K', \text{bottom}))\cong \FF\langle c\rangle$ and \begin{equation}\label{eq:well-defined1} c(\xi_{K'})=\iota'_*(c),\end{equation}  where  $\iota'$ is the inclusion map for the cable.  Moreover, since positive cabling doesn't change the contact structure, we have \begin{equation}\label{eq:well-defined2}c(\xi_K)=c(\xi_{K'}).\end{equation}

If we now reverse the orientation of the Heegaard surface in the proof of Proposition \ref{cable}, this has the effect of  changing the oriented manifold from $Y$ to $-Y$. It also has the effect of changing the sign of the multiplicities of the rational periodic domains. This reverses the Alexander grading (up to a translation), and proves  $$
\F(-Y, [F], K, \text{bottom})=\F(-Y, [F'],  K', \text{bottom}).
 $$ 
Let $c$ denote a generator of the homology of this complex.  Since the (singly-pointed) Heegaard diagram for  $-Y$ is independent of the additional basepoint used to specify $K$ or $K'$, we have \begin{equation}\label{eq:well-defined3}\iota_*(c)=\iota'_*(c).\end{equation} Indeed, the respective inclusion maps  can be obtained by taking a cycle representative for $c$ and considering the homology class it represents in $\HFa(-Y)$ by forgetting the respective additional basepoint.  Combining \eqref{eq:well-defined1}, \eqref{eq:well-defined2}, and \eqref{eq:well-defined3} yields the result.
 \end{proof}

%With Theorem \ref{cont-inv}  in hand, we turn to  Corollary \ref{cor:lens}. We prove a more general result:
%\begin{cor} Suppose $K\subset Y$ is a knot in an integer homology sphere $L$-space whose complement is irreducible and, further, that $n$ surgery on $K$ produces an $L$-space $Y_n$.  Then $K$ is fibered and for $n\ge 2g(K)$, the rationally fibered knot induced by the surgery $K_n$ induces a contact structure $\xi_{K_n}$ on $Y_n$ satisfying $c(\xi_{K_n})\ne 0$.  Regarding $K_n$ in $-Y_n$, we obtain a contact structure $\xi_{\overline{K_n}}$ satisfying  $c(\xi_{\overline{K_n}})\ne 0$.
%\end{cor}
%\begin{remark} Under the hypothesis \cite[Corollary 8.5]{KMOS} implies that $n\ge 2g-1$ (or, more precisely, its proof in the context of Heegaard Floer homology).  In the case that $Y_n$ is a lens spaces, the possibility that $n=2g$ has been ruled out by  \cite[Theorem 1.2]{Greene}.  Thus, by considering the surgery from $Y_n$ to $Y$ along $K_n$, we capture the corollary stated in the introduction
%\end{remark} 

\noindent{\bf Proof of Corollary \ref{cor:lens}:} Suppose that integer surgery on $K\subset L(p,q)$ is the $3$-sphere.  Then there is an induced knot $K'\subset S^3$ on which $\pm p$ surgery produces $L(p,q)$ (the core of the surgery torus).  In this situation, \cite[Theorem 1.1]{NiFibered} implies that $K'$ is fibered, and hence $K$ is rationally fibered.  By reflecting $K'$, if necessary, we may assume the surgery slope is $+p$ (this may change the orientation of $L(p,q)$, but as we ultimately consider both orientations on $L(p,q)$ this point will not affect the argument).  Now \cite[Theorem 1.4]{He4} states that either $p\ge 2g(K')$, in which case \begin{equation}\label{eq:equality}\mathrm{rk}\ \HFK(L(p,q),K)=\mathrm{rk} \ \HFa(L(p,q))=p,\end{equation} or $p=2g(K')-1$ in which case $$\mathrm{rk}\ \HFK(L(p,q),K)=\mathrm{rk}\ \HFa(L(p,q))+2=p+2.$$  The latter case, however, is ruled out by \cite[Theorem 1.2]{Greene}, and thus the rank of the knot Floer homology of $K$ is equal to the rank of the Floer homology of the manifold in which it sits.  This immediately implies that the inclusion 
$$
\iota: \F(L(p,q),K,\mathrm{bottom})\rightarrow \CFa(L(p,q))
$$ 
is injective on homology: the homology of $\F(L(p,q),K,\mathrm{bottom})$ is the bottom knot Floer homology group, which survives under the spectral sequence from the knot Floer homology of $K$ to  $\HFa(L(p,q))$ by the equality of ranks \eqref{eq:equality}.  Thus  $0\ne c(\xi_{\overline{K}})\in \HFa(-(-L(p,q)))$.  Since reversing the orientation of $L(p,q)$ changes neither the rank of the Floer homology of $K$ nor the ambient manifold, the inclusion $$ \iota: \F(-L(p,q),K,\mathrm{bottom})\rightarrow \CFa(-L(p,q)),$$ is also injective on homology, indicating that the contact structure $\xi_{{K}}$ induced by $K$ on $L(p,q)$ also has non-vanishing invariant.  \qed %\hskip3.25in \square

\begin{remark} The corollary is somewhat more general.  Indeed, let $K\subset Y$ be a knot in an integer homology sphere L-space whose complement is irreducible, and let $K_n\subset Y_n$ be the induced knot.  Then if $Y_n$ is an L-space and $n\ge 2g(K)$, the conclusion holds; that is, $K_n$ is rationally fibered and induces a tight contact structure on both $Y_n$ and $-Y_n$.
\end{remark}

\subsection{A Heegaard diagram for rationally fibered knots}
We can mimic the construction in \cite{contOS}
to pinpoint $c(\xi)$ as the homology class of a specific generator in a particular Heegaard diagram constructed from the open book.

\begin{prop} \label{intersection-pt} Let $K \subset Y$ be a rationally fibered knot. There is a Heegaard diagram 
adapted to $(Y, K)$ so that $\F(-Y, K, [F], \mathrm{bottom})$ is generated by a single intersection point $\bf{c}\in \Ta\cap \Tb$. Thus $c(\xi_K)=[{\bf c}]\in \HFa(-Y)$.
\end{prop}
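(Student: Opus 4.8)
The plan is to construct an explicit Heegaard diagram adapted to the rational open book, following the template of \cite{contOS}, and to identify the bottom filtered subcomplex with the span of a single intersection point. First I would recall the construction of a Heegaard diagram from a fibered knot: given the rational open book with page $F$ and monodromy $\phi$, one builds $Y$ as two copies of $F\times[0,1]$ glued appropriately, and the Heegaard surface $\Sigma$ is obtained by doubling the page, $\Sigma = F \cup_{\partial} (-F)$. The $\alpha$-curves come from a basis of arcs $\{a_i\}$ for $F$ (doubled across the boundary), and the $\beta$-curves come from the images $\{\phi(a_i)\}$, with each $\beta_i$ a small pushoff of $a_i$ in the second page, modified by the monodromy. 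The two basepoints $w,z$ are placed near the binding. The key point, as in \cite{contOS}, is that one can arrange the $a_i$ and $\phi(a_i)$ to intersect so that there is a distinguished intersection point $\mathbf{c}$ — one coordinate on each $a_i\cap \phi(a_i)$ pair near the "correct" side — which represents the contact class.

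Next I would address what is new in the rational case: because $K$ is only rationally null-homologous of order $p$, the boundary $\partial F$ wraps $p$ times around $K$, and the gluing region near the binding is more complicated than in the honest open book case. I would handle this by using the relative periodic domain machinery of Section~\ref{sec:filtration}. Concretely, one fixes a longitude $\lambda$ on $\Sigma$ as in the setup before Definition~1 of the relative periodic domain, and exhibits a relative periodic domain $\P$ representing $[F,\partial F]$ whose multiplicities are explicitly computable from the diagram. By Lemma~\ref{periodic}, the Alexander grading difference $\gr(\x)-\gr(\y) = n_{\x}(\P)-n_{\y}(\P)$ is then read off from these multiplicities. The goal is to check that, after reversing orientation to pass to $-Y$ (which reverses the sign of the multiplicities, as in the proof of Theorem~\ref{hfkbottom}), the intersection point $\mathbf{c}$ is the unique generator achieving the minimal value of $n_{\x}(\P)$, i.e. generates $\F(-Y,K,[F],\mathrm{bottom})$.

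The main obstacle I expect is precisely this combinatorial verification — showing that $\mathbf{c}$ strictly minimizes the multiplicity function among all intersection points. In the honest fibered case \cite{contOS} this follows because every other generator is "pushed" across a subsurface of positive area relative to $\mathbf{c}$; in the rational case one must control the extra spiraling region near the binding where $\partial F$ wraps $p$ times. I would deal with this by choosing the basis of arcs for $F$ so that the winding region is localized, arguing that any generator differing from $\mathbf{c}$ in some coordinate $a_i$ picks up a strictly positive contribution to $n_{\x}(\P)$ from the region swept out between $a_i$ and $\phi(a_i)$, and that contributions from the binding region, being common to all generators, cancel in the difference. This is morally the same computation that appears in the proof of Proposition~\ref{cable} (where $x_0\times C$ was identified as the top filtration level via multiplicities of $\P_n$), so I would lean on that analysis. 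Once uniqueness of the minimizer is established, the final sentence is immediate: $H_*(\F(-Y,K,[F],\mathrm{bottom})) = \FF\langle \mathbf{c}\rangle$, and by Theorem~\ref{hfkbottom} the inclusion-induced image of its generator is $c(\xi_K)$, so $c(\xi_K) = [\mathbf{c}]\in\HFa(-Y)$.
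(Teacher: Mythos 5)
Your overall strategy---build an explicit Heegaard diagram in the style of \cite{contOS}, exhibit a relative periodic domain representing $[F,\partial F]$, and use Lemma \ref{periodic} to show a single generator minimizes the multiplicity---is the same as the paper's. But there are two genuine gaps. First, your diagram construction (doubling the rational page $F$ directly) is not well-defined as stated: $\partial F$ wraps $p$ times around $K$, so the double of the page is not a Heegaard surface for $Y$ in the usual way, and you never say how the meridian of $K$ appears among the attaching curves. The paper avoids this by passing to the fibration Dehn filling $Y_0$ of the knot complement, building the honest open-book diagram there (doubling a surface $A$ with two boundary components $\alpha_1,\beta_1$), and then recovering $Y$ by performing the rational surgery \emph{on the diagram}: $\beta_1$ is replaced by a meridian $\mu$ homologous to $p\beta_1+q\gamma$, with longitude $\lambda\sim b\beta_1+a\gamma$, $pa-qb=-1$. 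All of the rational structure is thereby concentrated in the curves $\mu$ and $\lambda$ near $\beta_1\cup\gamma$.

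Second, and more seriously, your uniqueness argument addresses the wrong coordinates. The claim that a generator differing from $\mathbf{c}$ in an arc coordinate $a_i$ picks up positive multiplicity is exactly the part already present in \cite{contOS}; it pins down the coordinates $c_2,\dots,c_{2g+1}$ but nothing more. The genuinely new difficulty in the rational case is that the remaining coordinate lives on $\alpha_1\cap\mu$, which consists of $q$ points (while $\lambda$ meets $\alpha_1$ in $a$ points), so there are $q$ candidate generators all sharing the same arc coordinates. Your assertion that contributions from the binding region are ``common to all generators'' and cancel is false---these $q$ points are distinguished \emph{only} by their multiplicities in the winding region. The paper shows these multiplicities are mutually distinct by an arithmetic argument: equality of multiplicities at $x,x'\in\alpha_1\cap\mu$ would force $r_\lambda q - r_\mu a=0$ with $0<r_\lambda<a$ and $0<r_\mu<q$, contradicting $\gcd(a,q)=1$. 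Without this step (or a substitute for it) the proposition is not proved; you would only know that the bottom subcomplex is generated by \emph{some subset} of those $q$ points.
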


\begin{proof} We  adapt  \cite[Theorem 1.1]{contOS} to construct the required Heegaard diagram.  
Since $K$ is fibered, the complement of $K$ has a Dehn filling $Y_0$ which fibers over $S^1$.
We first construct a Heegaard diagram for $Y_0$, and then recover the desired diagram for $Y$ by a rational surgery.   

\begin{figure}[htb] 
	\labellist
	\small\hair 2pt
	\pinlabel $\beta_1$ at 133 165
	\pinlabel $\alpha_1$ at 133 -5
		\pinlabel $\beta_2$ at 28 48
	\pinlabel $\alpha_2$ at 30 111
	\pinlabel $\gamma$ at 152 98
	\pinlabel $\mu$ at 108 107
		\pinlabel $A$  at 12 80
	\pinlabel $\bar A$  at 250 80
	%\pinlabel $1$  at 63 25
	%\pinlabel $2$  at 63 62
	\endlabellist
\centering	
\includegraphics[scale=1.4]{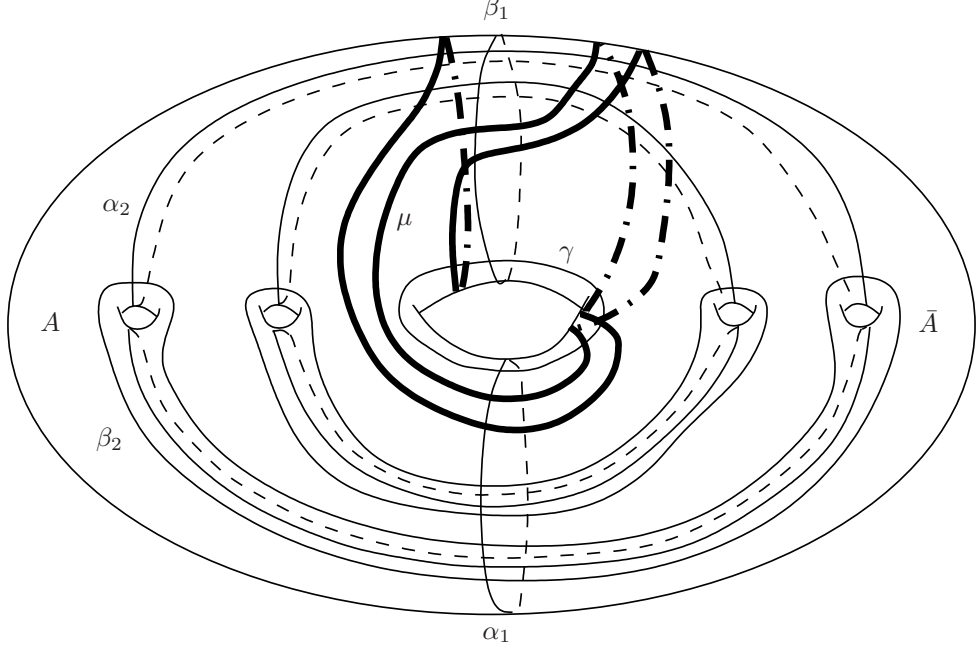}
\caption{Constructing a Heegaard diagram from rational open book. The figure shows the Heegaard diagram arising form the open book with trivial monodromy. (To avoid overloading the picture, we have not drawn some of the $\alpha$- curves here. The missing curves lie in the back of the surface,
in the top part of the diagram.) For a general open book, the $\beta$- curves in the $\bar A$ region 
will be affected by the action of the open book monodromy (this is not shown in the figure).} 
\label{OS-diag} 
\end{figure}

Let $F$ be the rational 
Seifert surface for $K$; capping it off, we obtain a closed surface $\hat{F}$ of genus $g$. We first follow the procedure from \cite{contOS} to obtain 
the Heegaard diagram for $S^1 \times \hat{F}$. Start with a genus $g$ surface $A$ with two boundary components, $\alpha_1$ and $\beta_1$. 
Let $\xi_2, \dots, \xi_{2g+1}$ and $\eta_2, \dots, \eta_{2g+1}$ be two $2g$-tuples of pairwise disjoint arcs in $A$ such that $\xi_i$ meets $\eta_i$
at a single point of transverse intersection, denoted $c_i$, and $\xi_i \cap \eta_j = \emptyset$ for $i\neq j$.   A Heegaard diagram 
$(\Sigma, \{\alpha_1, \alpha_2, \dots \alpha_{2g+1}\}, \{\beta_1, \beta_2, \dots \beta_{2g+1}\})$ can then be obtained by doubling $A$ along its boundary; that is, we consider the surface $\bar{A}$ obtained by reflecting $A$ across its boundary, and glue $A$ and $\bar{A}$ together to form a closed surface $\Sigma$.  This gluing produces  closed  curves $\alpha_i$ resp.
$\beta_i$, $i=2, \dots {2g+1}$  by gluing $\xi_i$ to its copy $\bar{\xi}_i$, resp. $\eta_i$ to its copy $\bar{\eta}_i$.
The result is a Heegaard diagram for $S^1 \times \hat{F}$. Moreover,  removing $\beta_1$ results in a Heegaard diagram for the complement 
of   $S^1\times \{\mathrm{pt}\}\subset S^1 \times \hat{F}$. This manifold is homeomorphic to the complement of the knot 
$B \subset \#^{2g} S^1 \times S^2$, where $B$  is the binding for the open book with trivial monodromy.   
The meridian of $B$ is represented by the curve $\gamma = \delta \cup \bar{\delta} \subset \Sigma$, 
formed by doubling an arc    $\delta \subset A$  connecting $\beta_1$ and $\alpha_1$.
These diagrams will be admissible 
after additional isotopies (finger moves) of the attaching circles \cite{contOS}.

To obtain a Heegaard diagram for $Y_0$, we must change the monodromy of the fibration. The monodromy map for $Y_0$ is the extension 
to $\hat{F}$ of an automorphism $\phi:F \to F$. Thinking of $F$ as the complement of $\bar{\delta}$ in $\bar{A}$, we extend it by the identity 
to get an automorphism  $\Phi: \Sigma \to \Sigma$. The diagram $(\Sigma, \aalpha, \bbeta=\{\beta_1, \Phi(\beta_2), \dots, \Phi(\beta_{2g+1})\})$  
represents $Y_0$,  and $(\Sigma, \aalpha, \{\Phi(\beta_2), \dots, \Phi(\beta_{2g+1})\})$ represents the complement of a fibered knot $\ti{K}$. 
With finger moves, these diagrams can be made weakly admissible for all $\Spinc$ structures, as above.

Since $Y$ is a Dehn filling of the complement of $\ti{K} \subset Y_0$, we can  obtain 
a Heegaard diagram for $Y$ by replacing  $\beta_1$ by the meridian $\mu$ of $K \subset Y$.
If $Y$ is obtained by a $p/q$-surgery on $\ti{K}$ (with respect to the longitude given by $\gamma$), 
then $\mu$ can be represented by a curve on  $\Sigma$ homologous to $p \beta_1 + q \gamma  $.   A longitude for knot $K$ 
is now given by a curve $\lambda$ on $\Sigma$ that intersects $\mu$ transversely at a single point, and is disjoint from the curves $\beta_2, \dots \beta_{2g+1}$.  Such a longitude is homologous to $b \beta_1 + a \gamma$, for $a,b$ satisfying $pa-qb=-1$.  We may assume that, like $\mu$, the curve $\lambda$ is supported in a small neighborhood of $\beta_1 \cup \gamma$.     

The resulting Heegaard diagram is shown in Figure \ref{OS-diag}, and Figure \ref{zoom-in} provides a closer look at the region containing $\mu$, $\beta_1$ and $\gamma$.

\begin{figure}[htb] 
	\labellist
	\small\hair 2pt
	\pinlabel $\alpha_1$ at 90 144
		\pinlabel $z$ at 63 105
	\pinlabel $w$ at 73 140
	\pinlabel $\lambda$ at 107 127
	\pinlabel $\mu$ at 125 145
		%\pinlabel $p+1$  at 63 99
	%\pinlabel $p$  at 106 99
	%\pinlabel $1$  at 63 25
	%\pinlabel $2$  at 63 62
	\endlabellist
\centering	
\includegraphics[scale=1.0]{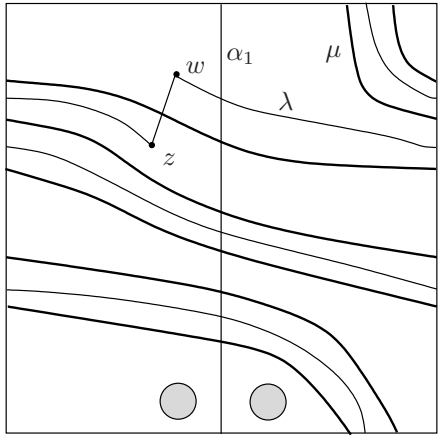}
\caption{Zooming in on Figure \ref{OS-diag}: the meridian and the longitude of the binding of rational open book.}
\label{zoom-in} 
\end{figure}

Observe that the Heegaard surface $\Sigma$ is cut by the attaching circles 
into a large region lying in $A$, a large region lying in $\bar{A}$, a number of regions with boundary on  $\lambda$, $\mu$, and $\alpha_1$, 
(see Figure \ref{zoom-in}), and a number of small regions in $\bar{A}$.  

Further observe that there is a 2-chain in $\Sigma$ whose boundary is $\alpha_1+\beta_1$. Since $\beta_1= q \lambda - a \mu$, 
we can find a \relative periodic domain $\P$ whose homology class equals that of the fiber, with $\d\P=  \alpha_1 + q\lambda-a\mu$.
The multiplicities of $\P$ are 0 in the large region in $A$, 1 in the large region in $\bar{A}$. 
The multiplicities in the regions in Figure \ref{zoom-in} require a bit more work, 
but are also straightforward to compute. To find them, we start with 0 in the top left corner of Figure \ref{zoom-in}, and then move to the neighboring regions, 
changing the multiplicity by $\pm 1$ when we cross $\alpha_1$, by $\pm q$ when we cross $\lambda$ and by $\pm a$ when we cross $\mu$.
(The signs depend on the direction in which the curves are traversed. In Figure \ref{zoom-in}, if travel upwards, the multiplicity of $\P$ increases 
when we cross $\lambda$, decreases when we cross $\mu$. When crossing $\alpha_1$, the multiplicity increases by 1 from left to right.) 

Our goal  now is to show that there is 
a unique generator $\x$ which minimizes the multiplicity $n_{\x}(\P)$. Since $\alpha_1$ is disjoint from all the curves $\beta'_2, \dots \beta'_{2g+1}$, 
every generator has the form $\x=(x, x_2, \dots x_{2g+1})$, where $x\in \alpha_1 \cap \mu$. As in \cite{contOS}, a generator minimizing $n_{\x}(\P)$ will  have $x_2, \dots x_{2g+1}$ contained in $A$. Thus, we must have $\{x_2, \dots, x_{2g+1}\}=\{c_2, \dots c_{2g+1}\}$; in particular, 
these coordinates of $\x$ are uniquely determined. The lowest value of  $n_{\x}(\P)$ will be attained by those  generators $\x=(x, c_2, \dots c_{2g+1})$
for which $n_x(\P)$ is the lowest among all $x\in \alpha_1 \cap \mu$.

To complete the 
proof of the proposition,  we  show that the values of $n_x(\P)$ are mutually distinct for the various points $x\in \alpha_1 \cap \mu$.  
If $n_x(\P)=n_{x'}(\P)$, then the multiplicities at the four corners of $x$ and $x'$ would 
be the same, since the multiplicities in the corners around $x$ and $x'$ change in the same way when the curves $\alpha_1$ and $\mu$ 
are crossed.
 Consider, however, the shortest path from $x$ to $x'$ along $\alpha_1$.  If we cross the curve $\lambda$ a total of $r_{\lambda}$ times and the curve $\mu$ a total of $r_{\mu}$ times along this path, then we have $r_{\lambda} q - r_{\mu} a=0$. However, since $\lambda$ intersects $\alpha_1$ at $a$ points 
and $\mu$ intersects $\alpha_1$ at $q$ points, $0<r_{\lambda}<a$ and $0<r_{\mu}<q$. Thus $r_{\lambda} q - r_{\mu} a=0$   contradicts the fact that  $\mathrm{gcd}(a, q)=1$. 
This shows that there is a unique 
point $c\in \alpha_1 \cap \mu$ with  smallest $n_c(\P)$.    

\end{proof}

\begin{remark} Theorem \ref{hfkbottom} and Proposition \ref{intersection-pt} provide two independent proofs of the fact that a rationally null-homologous fibered knot has knot Floer homology of rank 1 in the extremal Alexander grading. (This extends the analogous result for null-homologous knots, \cite{contOS}.) Yet another proof can be obtained by the sutured Floer homology of \cite{Ju}. \end{remark}

\section{The contact invariant of rational open books induced by surgery}  
 
 In this section we prove our non-vanishing theorem for the contact invariant of the  contact structure induced by the rational open book which results from surgery on the binding of an honest open book.  More precisely, recall that if we perform surgery on the binding of an honest open book, then the core of the surgery torus is a knot in the new manifold whose complement fibers over the circle (as it is homeomorphic to the complement of the original binding).  Theorem \ref{p/q-surgery}  says that if the contact invariant associated to the original open book is non-zero, then the contact invariant of the induced rational open book  is also non-zero, provided that the surgery parameter is sufficiently large.  

\vspace{0.1in} 
 
\noindent{\bf Theorem \ref{p/q-surgery}}{\em \ \  Let $K\subset Y$ be a fibered knot with genus $g$ fiber, and $\xi$ the contact structure induced by the associated open book.  Let $K_{p/q}\subset Y_{p/q}$ be the rationally fibered knot arising   as the core of the solid torus used to construct $p/q$ surgery on $K$, and $\xi_{p/q}$ the contact  structure

 \vspace{-0.035in} \noindent  induced by the associated rational open book.  Suppose  $c(\xi)\! \! \in\!  \HF(-Y)$ is non-zero. Then $c(\xi_{p/q}) \in \HF(-Y_{p/q})$ is  non-zero for all $p/q \geq 2g$.    % the manifold obtained by $p/q$ surgery on $K$,  and  $\xi_{p/q}$ be the contact structure on $Y_{p/q}$ induced by the resulting rational open book.  Suppose  $c(\xi) \in \HF(-Y)$ is non-zero. Then $c(\xi_{p/q}) \in \HF(-Y_{p/q})$ is  non-zero for all $p/q \geq 2g$.   
}

\vspace{0.1in}

We prove the theorem in steps, each step expanding the range of slopes for which the theorem holds.   The first step is to show that the theorem holds for all sufficiently large integral slopes.  This is accomplished by Theorem \ref{thm:nonvanishing} below.  The key tool in this step  is an understanding of the relationship between the knot Floer homology of a knot $K\subset Y$ and the knot Floer homology of the core of the surgery torus $\coren\subset Y_n$.   This relationship was studied in \cite{He2}, following the ideas of \cite{knotOS}.   We begin  this section with a detailed discussion of these results, and prove a generalization (Theorem \ref{thm:core2}) which will serve as the cornerstone of our proof.

Our next step is to establish the theorem for all integral slopes $n\ge 2g$.  We accomplish this with  Theorem \ref{2g}, whose proof relies on a surgery exact sequence for the knot Floer homology of the core, together with an adjunction inequality.  

Finally, we extend our results to all rational slopes $p/q \ge 2g$.  This argument is geometric, showing that the contact structures $\xi_{p/q}$ with rational slope can be obtained from those with integral slope by a sequence of Legendrian surgeries.

\subsection{The knot Floer homology of the core of the surgery torus}

 We begin by stating a slightly rephrased version of \cite[Theorem 4.1]{He2}. We use notation of \cite{knotOS}. 
 
\begin{theorem}\label{thm:core1}  Let $K\subset Y$ be a null-homologous knot, and let $\Yn$ denote the manifold obtained by $n$-framed surgery on $K$.  Then for all $n\gg0$ sufficiently large, we have $$\CFa(\Yn,\spinc_m)\simeq C\{\max(i, j-m)=0\},$$
where $C\{\max(i, j-m)=0\}$ denotes the subquotient complex of $CFK^{\infty}(Y,K)$ whose $(i,j)$ filtration levels satisfy the stated constraint.

  Moreover, the core of the surgery torus induces a knot $\coren\subset \Yn$ and the filtration of $\CFa(\Yn,\spinc_m)$ induced by $\coren$ is filtered chain homotopy equivalent to the two-step filtration:
$$
0 \subset C\{i<0, j=m\} \subset C\{\max(i, j-m)=0\}. \qed 
$$ 
%The difference between the relative $\SpinC$-structures on $\Yn\setminus \coren$ associated to the two levels in this filtration is $\PD[\mu_{\coren}]$, where $\mu_{\coren}$ is the meridian to $\coren$.
\end{theorem}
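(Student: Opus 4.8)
The plan is to derive Theorem~\ref{thm:core1} from the large surgery formula of \cite{knotOS}, keeping careful track of an extra basepoint recording the core of the surgery torus. The starting point is the standard fact that for $n\gg0$ there is an identification $\CFa(\Yn,\spinc_m)\simeq C\{\max(i,j-m)=0\}$, coming from a Heegaard triple (or the integer surgeries description): one takes a Heegaard diagram $(\Sigma,\aalpha,\bbeta,w)$ for $Y$ adapted to $K$, winds the meridian $\mu$ a large number of times along the framing $\lambda$, and identifies the resulting intersection points in each $\Spinc$ structure $\spinc_m$ with generators of the subquotient complex of $CFK^\infty(Y,K)$ in the stated filtration range. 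I would recall this construction in enough detail to see where the core $\coren$ lives in the picture.

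The key new ingredient is to observe that the core $\coren\subset\Yn$ is visible in this wound Heegaard diagram as follows: after surgery, the dual knot (core of the filling torus) is isotopic to a curve running parallel to the belt circle of the attaching handle. Concretely, placing a second basepoint $z$ near the winding region, on the appropriate side of $\mu$ relative to $w$, produces a doubly-pointed Heegaard diagram for $(\Yn,\coren)$. I would then compute the Alexander grading of the generators of $C\{\max(i,j-m)=0\}$ with respect to this basepoint pair. Because $z$ and $w$ are separated precisely by a single strand in the winding region (this is the geometric content of ``the core'' being the belt circle), a holomorphic disk $\phi$ between two generators $\x,\y$ satisfies $n_z(\phi)-n_w(\phi)\in\{0,1\}$ depending on which side of the relevant region the generators sit. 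Translating through the identification with $CFK^\infty$, generators lying in $C\{i<0,j=m\}$ acquire the lower Alexander grading and those in $C\{i=0,j\le m\}$ the higher one, so the induced filtration is exactly the two-step filtration $0\subset C\{i<0,j=m\}\subset C\{\max(i,j-m)=0\}$. That the subcomplex is indeed $C\{i<0,j=m\}$ rather than some other piece follows by matching the $n_z$-counting against the $(i,j)$-coordinates using the relation $n_w(\phi)-n_z(\phi)$ versus the Maslov index formula, exactly as in the filtered arguments of \cite{He2,knotOS}.

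The main obstacle, and the place where care is needed, is pinning down the correct side on which to place the basepoint $z$ and verifying that the resulting filtration is the claimed one and not, say, its reverse or a shift --- this amounts to a sign/orientation bookkeeping in the winding region, and is precisely the kind of computation carried out in \cite[Theorem 4.1]{He2}. I would handle this by appealing directly to that theorem: the statement here is a ``slight rephrasing,'' so the substantive work is already done there, and what remains is to package it in the language of \cite{knotOS} (subquotient complexes $C\{\cdot\}$) and to note that the filtered chain homotopy type, rather than just the filtered chain complex, is what survives --- this last point follows from the usual naturality of the large-surgery identification under the isotopies and handleslides used to achieve admissibility, since all of these can be taken to miss both basepoints.
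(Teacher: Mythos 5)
Your proposal matches the paper's treatment: the first part is quoted directly from \cite[Theorem 4.4]{knotOS}, and the second part is obtained by placing an extra basepoint in the winding region of the large-surgery Heegaard (triple) diagram so that the doubly-pointed diagram represents $\coren$, then reading off the two-step filtration from $n_{z'}(\phi)-n_w(\phi)$ exactly as in \cite[Theorem 4.1]{He2}, whose proof the paper notes carries over verbatim to general $Y$. The winding-region bookkeeping you defer to \cite{He2} is precisely what the paper also defers (and later spells out in its proof of the refined Theorem~\ref{thm:core2}), so no gap.
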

The first part of the theorem is simply \cite[Theorem 4.4]{knotOS}.  The second part, which deals with the filtration induced by $\coren$, was stated for $Y=S^3$ in the form above in \cite[proof of Theorem 4.1, pg 129]{He2}.  The proof carries through verbatim to general $Y$.  We also note that the core of the surgery torus is isotopic to the meridian of $K$, viewed as a knot in $\Yn$.  The original statement was phrased in these terms.

Since  the contact invariant associated to $\coren$ is calculated using the bottom subcomplex in the  knot Floer homology filtration of $\coren$, we need to understand what ``bottom" means in the theorem above.   Thinking of the filtration as a filtration by relative $\SpinC$ structures on $\Yn\setminus \nbd\coren$,  the theorem above determines this difference in the case of relative $\SpinC$ structures that project to the same absolute $\SpinC$-structure on $\Yn$ under the natural filling map \eqref{eq:filling}.

% thus obtaining a homology class of nowhere vanishing vector fields on $\Yn$).  

Thus we need to understand the difference between the relative $\SpinC$ structures (or, if the reader prefers, the Alexander grading difference) associated to knot Floer homology groups for the varying $\spinc_m\in \SpinC(Y_n)$.   Since the difference of two relative $\SpinC$ structures lies in $H^2(\Yn\setminus \nbd\coren,\partial (\Yn\setminus \nbd\coren))\cong H^2(Y)\oplus \Z$, we should make a few remarks about the algebraic topology of this situation.  The first is to remind the reader that $\Yn\setminus \nbd\coren\cong Y\setminus \nbdK$, so the algebraic topology is, in a sense,  identical.  The key conceptual difference is that we have changed the natural framing on the boundary of this manifold.  Thus, while $\mu_K$ generates the additional $\Z$ factor in $H_1(Y\setminus \nbd K)\cong H_1(Y)\oplus \Z$, the meridian of ${\coren}$ does not generate the $\Z$ factor in  $H_1(Y_n\setminus \nbd \coren)\cong H_1(Y)\oplus \Z$.  Indeed, $[\mu_{\coren}]=n\cm \rho$ for a class generating this summand, and it is easy to see that $\rho=[\coren]$, the homology class of a push-off of $\coren$ into its complement.

 Before stating the refined version of Theorem  \ref{thm:core1} we establish some notation.  Let $$S_m=C\{i<0, j=m\} $$
$$ Q_m= C\{i=0,j\le m\}$$
be the sub and quotient complexes in the filtration of $C\{\max(i, j-m)=0\}$ given by the theorem.  The direct sum of all the knot Floer homology groups of $\coren$ (without the Alexander grading) is then given by $$\HFK(\Yn,\coren)=\bigoplus_{m=-\lfloor n/2 \rfloor+1 }^{\lfloor n/2\rfloor}  H_*(S_m)\oplus H_*(Q_m).$$
A complete description of the knot Floer homology of the core of the surgery is given by

\begin{theorem}\label{thm:core2}
Let $K\subset Y$ be a null-homologous knot and $\coren\subset \Yn$ be the core of the surgery torus, viewed as a knot in the manifold obtained by $n$-framed surgery on $K$. Then for all $n\gg0$ sufficiently large,  the Alexander grading difference between the various knot Floer homology groups is given by 
$$
\begin{array}{lll} \gr( S_m)-\gr(Q_m)&= &n\\
\gr( S_i)-\gr (S_j)&= &-(i-j)\\
\gr( Q_i)-\gr(Q_j)&= & -(i-j)
\end{array}
$$
for all $-\lfloor n/2 \rfloor+1 \le m,i,j \le \lfloor n/2\rfloor$.
\end{theorem}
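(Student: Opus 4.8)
The plan is to compute the three Alexander grading differences by exhibiting explicit relative periodic domains in a suitable Heegaard diagram for $(\Yn,\coren)$ and applying Lemma \ref{periodic}. The starting point is the large-surgery Heegaard diagram used to prove Theorem \ref{thm:core1}, which comes from a doubly-pointed diagram $(\Sigma,\aalpha,\bbeta,w,z)$ for $(Y,K)$ by winding the meridian $\beta$ of $K$ many times along an $n$-framed longitude and adding the extra basepoint recording $\coren$; the generators of $\CFK^\infty(Y,K)$ in a fixed absolute $\SpinC$ structure get distributed among the various $\spinc_m$ according to the relative position $(i,j)$ of the corresponding winding-region intersection point, exactly as in \cite{He2,knotOS}. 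In this diagram the subcomplex $S_m=C\{i<0,j=m\}$ and quotient $Q_m=C\{i=0,j\le m\}$ are supported on intersection points in specified sub-regions of the winding region. The meridian of $\coren$ is the curve $\gamma$ (a small push-off of the original longitude), and a relative periodic domain $\P$ whose homology class is that of the rational Seifert surface for $\coren$ has $\partial\P$ containing $\gamma$ with multiplicity one, together with $n$ copies of the old meridian $\beta$ and some $\alpha$- and $\beta$-curves — this is precisely the relation one reads off from the homology $[\mu_{\coren}]=n\cdot\rho$ noted in the text.

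First I would handle the two ``same side'' differences $\gr(S_i)-\gr(S_j)=-(i-j)$ and $\gr(Q_i)-\gr(Q_j)=-(i-j)$. Here the relevant pairs of generators differ only in the $j$-coordinate (the Alexander grading coming from the original knot $K$), and they sit in adjacent chambers of the winding region separated by arcs of $\gamma$ (equivalently, the old filtration-basepoint arc); crossing one such arc changes $n_{\x}(\P)$ by exactly the coefficient of $\gamma$ in $\partial\P$, which is $1$, with a sign determined by orientations. Since moving from the region of $Q_j$ (or $S_j$) to that of $Q_i$ (or $S_i$) with $i>j$ means crossing $i-j$ such arcs in the direction that decreases the multiplicity, Lemma \ref{periodic} gives $\gr(S_i)-\gr(S_j)=n_{\x_i}(\P)-n_{\x_j}(\P)=-(i-j)$, and identically for the $Q$'s. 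The content of the claim $n\gg0$ is just that for $n$ large the winding region has enough room that all these generators genuinely appear and the multiplicity changes are monotone along the spiral, as in the proof of Proposition \ref{cable}.

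The main obstacle — and the step I would spend the most care on — is the first equation $\gr(S_m)-\gr(Q_m)=n$. Passing from a generator supported in the $Q_m$-region (on the $i=0$ column) to one in the $S_m$-region (the $i<0$ column, same $j=m$) requires crossing the winding meridian $\beta$ of $K$ some number of times, and $\beta$ appears in $\partial\P$ with total multiplicity $n$ because $[\mu_{\coren}]=n\cdot[\coren]$; one must check that the net number of signed crossings of $\beta$ along the shortest path between representative intersection points is exactly $n$ (and that the crossings of $\gamma$ and of the $\alpha$-curves contribute zero net change, which holds because each $\alpha$- and $\beta$-curve carries exactly one coordinate of each generator, exactly as in the cancellation argument at the end of the proof of Lemma \ref{periodic}). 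Equivalently, one can phrase this cohomologically: the difference of the two relative $\SpinC$ structures is $n\cdot\PD[\coren]$ minus a correction living over $H^2(Y)$, and pairing with $[F',\partial F']$ picks out the coefficient $n$; I would present whichever of the two bookkeeping arguments — direct multiplicity count in the winding region, or the $\SpinC$-theoretic computation using $[\mu_{\coren}]=n\rho$ — is cleaner, and cross-check the sign against the $(2,k)$-torus-knot example mentioned in the introduction. Once this single number is pinned down, the theorem follows by assembling the three computations.
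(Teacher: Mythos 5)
Your strategy---reading all three grading differences off the multiplicities of a single relative periodic domain in the winding-region diagram, via Lemma \ref{periodic}---is viable, and it is a genuinely different computational device from the one the paper uses. The paper instead computes the class $\epsilon(\x,\y)\in H_1(\Yn\setminus\nbd\coren)$ directly and pairs it with $[F,\partial F]$: for the second and third equations it connects $\{x_{-l},\mathbf{s}\}$ to $\{x_{-l+1},\mathbf{s}\}$ by a curve wrapping once around the neck of the winding region, so that $\epsilon(\x,\x')=[\coren]$ pairs to give $1$; for the first equation it takes a Whitney disk $\phi$ joining two generators of the same $\spinc_m$ and observes that $n_{z'}(\phi)-n_w(\phi)=\pm1$ exactly when the winding coordinates straddle $x_0$, whence $\epsilon(\x,\y)=\pm[\mu_{\coren}]$ pairs to give $\pm n$. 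This is precisely the ``cohomological computation using $[\mu_{\coren}]=n\rho$'' that you offer as your fallback, so your alternative option is, in effect, the paper's proof. Your periodic-domain treatment of the second and third equations is sound as stated: it is the same chamber-by-chamber multiplicity count as in Proposition \ref{cable} and Figure \ref{iterated-finger}.

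Two repairs are needed in your treatment of $\gr(S_m)-\gr(Q_m)=n$, which you rightly single out as the delicate step. First, the bookkeeping: the curve carrying coefficient $n$ in $\partial\P$ is the \emph{longitude} of $\coren$, i.e.\ the old meridian $\mu_K$ at the core of the winding annulus, and a path along $\alpha_g$ from one side of $x_0$ to the other crosses it once, not $n$ times---a single crossing of a coefficient-$n$ curve already produces the jump of $n$---whereas the wound surgery curve (the new meridian $\mu_{\coren}$) is an attaching circle of coefficient $1$ responsible for the $\pm1$ jumps. As written you use ``$\beta$'' for both of these curves in different paragraphs, and the crossing count ``exactly $n$'' does not parse. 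Second, and more substantively: two generators of the same $\spinc_m$ lying in $S_m$ and $Q_m$ never share their coordinates outside the winding region (for each fixed $(g-1)$-tuple $\mathbf{s}$ there is exactly one winding position landing in $\spinc_m$, by the bijection of Theorem \ref{thm:core1}), so there is no single ``shortest path'' and $n_{\x}(\P)-n_{\y}(\P)$ picks up contributions from the rest of the diagram. Your route can be saved---for instance, compare $\{x_1,\mathbf{s}\}$ with $\{x_{-1},\mathbf{s}\}$, which share $\mathbf{s}$ but lie in a quotient and a subcomplex for \emph{different} $\spinc$ structures, and then propagate using the $\pm1$ relations you have already established---but some such detour is mandatory. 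The paper's Whitney-disk formulation sidesteps this entirely, because $n_{z'}(\phi)-n_w(\phi)$ is determined in the winding region even when the two generators differ elsewhere.
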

\begin{remark} The filtration on $\CFa(\Yn)$ induced by $\coren$ is most easily understood graphically.  For this we refer the reader to Figure \ref{fig:hooks}.
\end{remark}

\begin{figure}
\labellist
	\small\hair 2pt
	\pinlabel $i=0$ at 175 16
	\pinlabel $Q_1$ at 174 43
	\pinlabel $S_0$ at 11 171
	\pinlabel $S_1$ at 11 204
	\pinlabel $S_2$ at 11 235
	\pinlabel $j=0$ at 240 171
	
	%\pinlabel $p+1$  at 63 99
	%\pinlabel $p$  at 106 99
	%\pinlabel $1$  at 63 25
	%\pinlabel $2$  at 63 62
	\endlabellist
\centering
\includegraphics[scale=1.0]{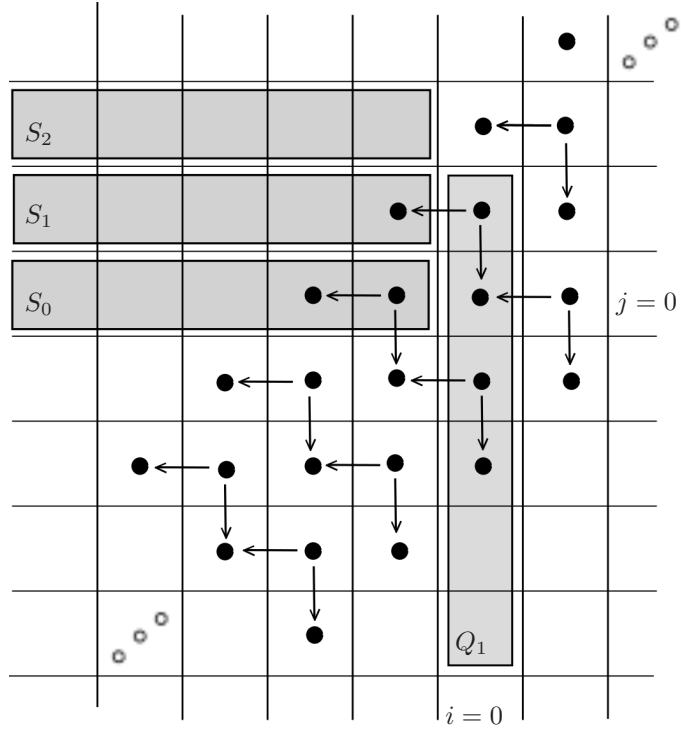}
\caption{\label{fig:hooks}   Shown is $CFK^{\infty}(S^3,K)$, for $K$ the $(2,5)$ torus knot.  Dots equal $\FF$, and arrows are non-trivial terms in the differential. The $\Z\oplus \Z$ filtration is given by the $(i,j)$ coordinates. The non-trivial knot Floer homology group for $\coren\subset S^3_n(K)$ with lowest Alexander grading is the homology of the subcomplex $S_1$ (while $S_2$ has lower Alexander grading, its homology is trivial).  The homology of $S_0$ is the knot Floer homology group with Alexander grading $1$ greater than that of $S_1$.  The homology of $Q_1$ is the knot Floer homology group in Alexander grading $n$ greater than $S_1$.}

\end{figure}

\begin{figure}
	\begin{center}
\psfrag{b}{$\beta_g$}
\psfrag{a}{$\alpha_g$}
\psfrag{c}{$\gamma_g$}
\psfrag{theta}{$\Theta$}
\psfrag{w}{$w$}
\psfrag{z}{$z$}
\psfrag{y}{$y$}
\psfrag{z'}{$z'$}
\psfrag{x-1}{$x_1$}
\psfrag{x-2}{$x_2$}
\psfrag{x-3}{$x_3$}
\psfrag{x0}{$x_0$}
\psfrag{x1}{$x_{-1}$}
\psfrag{x2}{$x_{-2}$}
\psfrag{x3}{$x_{-3}$}
\psfrag{0}{$0$}
\psfrag{1}{$1$}
\psfrag{2}{$2$}
\psfrag{psi1}{$\psi_1$}
\psfrag{psi2}{$\psi_{-2}$}
\psfrag{alpha1}{$\alpha_1$}
\includegraphics[width=3.5in]{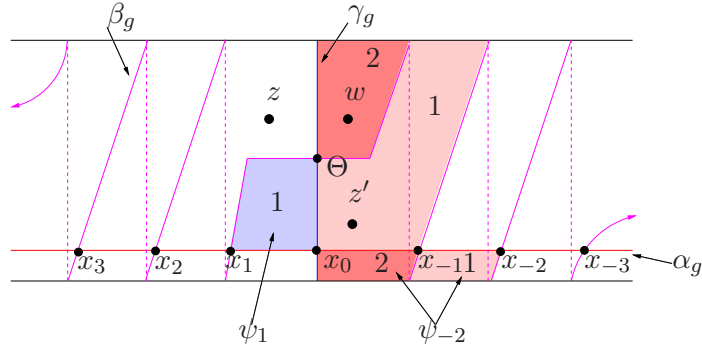}
\caption{\label{fig:winding}
The ``winding region" of the Heegaard triple diagram.   A small triangle $\psi_1$ connects $x_1$ to $x_0$ and a small triangle $\psi_{-2}$ connects $x_{-2}$ to $x_0$. 
} 
\end{center}
\end{figure}

\begin{proof}  The proof is a straightforward extension of the proof of \cite[Theorem 4.1]{He2} which, in turn, was an extension of the proof of \cite[Theorem 4.4]{knotOS}.  Both proofs were local, and involve an examination of the winding region in a Heegaard triple diagram representing the 2-handle cobordism from $\Yn$ to $Y$.  See Figure \ref{fig:winding} for a depiction of this region.  

Given this Heegaard triple diagram, a chain map  $$\CF(\Yn,\spinc_m)\longrightarrow C\{\mathrm{max}(i,j-m)=0\}$$ is defined in \cite{knotOS} by counting pseudo-holomorphic triangles. The obvious small triangles present in the winding region (together with their extensions to $g$-tuples of small triangles in the rest of the triple diagram) induce a bijection of groups, provided that $n$ is large enough to ensure that all the intersection points for $\spinc_m$ have $\alpha_g$ component in the winding region.   Moreover, these small triangles constitute the lowest order terms of the chain map with respect to the area filtration, and this latter fact shows that the chain map induces an isomorphism on homology.

To understand the filtration of $\CF(\Yn,\spinc_m)$ induced by $\coren$, we observe that the placement of a third basepoint $z'$ on the Heegaard triple diagram has the property that $(\Sigma,\alphas,\betas,z',w)$ represents $\coren$.  The bijection induced by small triangles from the last paragraph is such that \begin{enumerate} \item if an intersection point for $\CF(\Yn)$  has $\alpha_g\cap \beta_g$ component  lying to the right of $x_0\in \alpha_g\cap \gamma_g$, then it is sent to a subcomplex $S_m$, with the distance to $x_0$ proportional to $-m$, \item if the 
$\alpha_g\cap \beta_g$ component  is to the left of $x_0\in \alpha_g\cap \gamma_g$, the intersection point is sent to a quotient $Q_m$, with the distance to $x_0$ proportional to $-m$. \end{enumerate} Finally, any two intersection points $\x,\y$ representing $\spinc_m$ can be connected by a Whitney disk $\phi$ which satisfies:
$$  n_{z'}(\phi)-n_w(\phi)=  \pm1 $$
if the $\alpha_g\cap \beta_g$ components of $\x$ and $\y$ are on opposite sides of $x_0$, and 
$$  n_{z'}(\phi)-n_w(\phi)=  0$$
otherwise.  Since $$(n_{z'}(\phi)-n_w(\phi))\cm[\mu_{\coren}]= \epsilon(\x,\y)\in H_1(\Yn\setminus\nbd \coren)\cong H_1(Y)\oplus \Z\langle[\coren]\rangle,$$ and
$$\langle \PD([\mu_{\coren}]),[F,\partial F]\rangle= n[\coren]\cm[F,\partial F]=n,$$
this proves Theorem \ref{thm:core1} (and the first part of the present generalization).  

To complete the theorem, we must understand the filtration difference between  the subcomplexes $S_i,\ S_j$ (respectively, the quotient complexes $Q_i, \ Q_j$) with $i\ne j$.  By the transitivity of the filtration, it will suffice to understand the difference between $S_i$ and $S_{i+1}$. Consider a generator $\x=\{x_{-l},\mathbf{s}\}$ lying in the subcomplex $S_i$, where $x_{-l}\in \alpha_g\cap\beta_g$ and $\mathbf{s}$ is the remaining $(g-1)$-tuple of intersection points.    There is a corresponding generator $\x'=\{x_{-l+1},\mathbf{s}\}$ which lies in $S_{i+1}$, according to $(1)$ above.  These two generators can be connected by a curve which wraps once around the neck of the winding region; that is, $\epsilon(\x,\x')=[\coren]$,  since this curve represents the generator of $H_1(\Yn\setminus \coren)$. Thus we have 
$$ \gr(\x)-\gr(\x')=   \langle \PD([\coren]) ,[F,\partial F]\rangle = 1.$$
This proves the second line in the theorem.  The third is given by a mirror argument on the left side of $x_0$.

\end{proof}

\subsection{Non-vanishing for sufficiently large integral slopes}

With a firm understanding of the relationship between the knot Floer homology of $K\subset Y$ and $\coren\subset Y_n$, we can easily establish a non-vanishing theorem for sufficiently large integral surgeries.

\begin{theorem}\label{thm:nonvanishing} Suppose that the contact structure $\xi$, compatible with an open book $(Y, K)$, has $c(\xi)\neq 0$. 
For $n>0$, perform $n$-surgery on $K$, and consider the induced rational open book $(Y_n, \coren)$ and the compatible contact structure 
$\xi_n$. Then $c(\xi_n)\neq 0$ if $n$ is sufficiently large. 
\end{theorem}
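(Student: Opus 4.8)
The plan is to combine Theorem~\ref{thm:core2} (the complete description of the knot Floer filtration induced by $\coren\subset\Yn$ for $n\gg 0$) with Theorem~\ref{hfkbottom} (which identifies $c(\xi_n)$ with the image of the generator of the bottom filtered subcomplex under the inclusion-induced map on homology). First I would pass to $-\Yn$: reversing the orientation of the Heegaard surface in the winding-region picture of Figure~\ref{fig:winding} turns $\Yn$ into $-\Yn$ and, by the sign reversal of the relative periodic domain multiplicities (exactly as in the proof of Theorem~\ref{hfkbottom}), reverses the Alexander grading up to an overall translation. Concretely, the filtration of $\CFa(-\Yn)$ induced by $\coren$ is, in each $\SpinC$ structure $\spinc_m$, the two-step filtration with sub $S_m=C\{i<0,\,j=m\}$ sitting inside $C\{\max(i,j-m)=0\}$ (now read inside $\CFK^\infty(-Y,K)$), and the grading shifts of Theorem~\ref{thm:core2} tell us precisely which of these pieces is lowest overall. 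Since $\gr(S_m)-\gr(Q_m)=n$ and the $S_m$ (resp.\ $Q_m$) are ordered linearly with unit steps, for $n$ large the global bottom of the filtration is the subcomplex $S_{\lfloor n/2\rfloor}$ (the extreme spin$^c$ on the ``sub'' side) — equivalently $C\{i<0,\,j=j_{\min}\}$ after orientation reversal — and Theorem~\ref{hfkbottom} (applied to the rationally fibered knot $\coren$) guarantees this complex has homology $\FF$.

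The heart of the argument is then to show that the generator $c$ of $H_*(S_{\max})$ maps to a nonzero class under the inclusion $\iota_*\colon H_*(S_{\max})\to \HFa_*(-\Yn)$. Here I would use the hypothesis $c(\xi)\ne 0$ together with the structure of the isomorphism $\CFa(\Yn,\spinc_m)\simeq C\{\max(i,j-m)=0\}$. The point is that $c(\xi)\in\HFa(-Y)$ is, by Theorem~\ref{hfkbottom} applied to the fibered knot $K\subset Y$, the image of the generator of $H_*(\F(-Y,[F],K,\mathrm{bottom}))$, and the bottom filtered subcomplex of $\CFK$ of $K\subset Y$ is naturally identified with a subquotient appearing inside $C\{\max(i,j-m)=0\}$ for the extreme $m$. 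I would make this precise by chasing the large-$n$ surgery formula: in the extreme $\SpinC$ structure on the ``sub'' side, $C\{\max(i,j-m)=0\}$ degenerates so that its homology is computed by $C\{i<0,\,j=j_{\min}\}$ together with a small quotient piece, and the map $H_*(S_{\max})\to H_*(C\{\max(i,j-m)=0\})=\HFa(-\Yn,\spinc_{\max})$ is identified (via the vertical/horizontal differential structure of $\CFK^\infty$) with the inclusion of the bottom of the knot filtration of $K$ into $\HFa(-Y)$, i.e.\ with the map whose image is $c(\xi)\ne 0$. Nonvanishing of $c(\xi)$ then forces $\iota_*(c)\ne 0$, hence $c(\xi_n)\ne 0$.

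The main obstacle I anticipate is precisely this last identification: making rigorous the claim that, in the extremal $\SpinC$ structure, the inclusion-induced map $H_*(S_{\max})\to \HFa(-\Yn,\spinc_{\max})$ ``is'' the map $H_*(\F(-Y,K,\mathrm{bottom}))\to\HFa(-Y)$ computing $c(\xi)$. This requires keeping careful track of which subquotient of $\CFK^\infty(-Y,K)$ represents the bottom of the $K$-filtration, checking that the extremal $\spinc_{\max}$ summand of $C\{\max(i,j-m)=0\}$ really collapses to that subquotient in the limit $n\to\infty$ (so that no extraneous differentials interfere), and verifying compatibility of the two orientation-reversal conventions used in Theorem~\ref{hfkbottom} and in Theorem~\ref{thm:core2}. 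Once that dictionary is set up, the deduction $c(\xi)\ne 0\Rightarrow c(\xi_n)\ne 0$ is immediate, and the remaining slopes $n\ge 2g$ and rational $p/q\ge 2g$ are handled by the subsequent subsections rather than here.
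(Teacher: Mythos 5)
Your high-level strategy --- combine Theorem \ref{thm:core2} with Theorem \ref{hfkbottom} and deduce $c(\xi_n)\ne 0$ from $c(\xi)\ne 0$ by matching extremal pieces of the two knot filtrations --- is indeed the paper's strategy, but your execution has a concrete error and, more importantly, omits the step that constitutes the actual proof. The error: passing to $-\Yn$ is not the cosmetic move you describe. Reversing orientation dualizes the filtered complex, and under $\CFK^\infty(-Y,K)\cong\CFK^\infty(Y,K)^*$ (with $(i,j)\mapsto(-i,-j)$) the subquotient $C\{\max(i,j-m)=0\}$ becomes $C\{\min(i,j+m)=0\}$ --- the shape of the large \emph{negative} surgery formula, not of the large positive one --- while subcomplexes become quotient complexes. (Note also that applying the large-$n$ formula directly to $K\subset -Y$ would describe $-Y_{-n}$, not $-\Yn$.) Consequently the bottom filtered subcomplex of $\CFa(-\Yn)$ is dual to the \emph{top quotient} on $\Yn$, which by Theorem \ref{thm:core2} and the adjunction inequality is $Q_{-g}=C\{i=0,j\le -g\}$; it is not any $S_m$, and in particular not $S_{\lfloor n/2\rfloor}$ --- the relevant extremal $\SpinC$ structure is $m=-g$, fixed independently of $n$, since the pieces with $|m|$ large have vanishing homology.

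The genuine gap is the step you yourself flag as ``the main obstacle I anticipate'': identifying the inclusion-induced map for $\coren\subset -\Yn$ with the one for $K\subset -Y$. That identification is the entire content of the proof, and the paper closes it by \emph{not} working on $-\Yn$ at all. Using the duality between $\HFa(-Y)$ and the Floer cohomology of $Y$ (in its filtered form), together with the fact that the top knot Floer group has rank one, the nonvanishing of $\iota_*$ (resp.\ $\iotan_*$) is equivalent to the nontriviality of the kernel of the connecting homomorphism $\delta_*:\HFK(Y,K,\mathrm{top})\to H_*(\F(Y,K,\mathrm{top}-1))$ (resp.\ $\deltan_*$). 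Theorem \ref{thm:core2} then shows $\deltan_*$ factors as the connecting map $H_*(C\{i=0,j=-g\})\to H_{*-1}(C\{i<0,j=-g\})$, and the filtered chain homotopy equivalence between the horizontal and vertical complexes $C\{j=-g\}\simeq C\{i=0\}$ of \cite[Proposition 3.8]{knotOS} converts this into $\delta_*$ for $K\subset Y$. Without this chain of identifications (or a careful direct dualization on $-\Yn$, which would amount to the same work), the implication $c(\xi)\ne 0\Rightarrow c(\xi_n)\ne 0$ remains unproved in your proposal.
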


\begin{proof} Let $\xi$ be a contact structure compatible with an open book associated to a fibered knot $K\subset Y$, and let $\xi_n$ be the contact structure compatible with the rational open book associated to $\coren\subset Y_n$.  By definition, the \os \ contact element $c(\xi)$ is the image in $\HF(-Y)$ 
of the generator of $H_*(\F(-Y, K,\mathrm{bottom}))\cong 
\mathbb{F}$, under the map induced by the inclusion:
$$\iota: \F(-Y, K,\mathrm{bottom}) \hookrightarrow \CF(-Y).$$  By Theorem \ref{hfkbottom}, this definition extends to rational open books.  That is, the contact element $c(\xi_n)$  is equal to the image in $\HF(-Y_n)$ of the generator 
of $\F(-Y_n, \coren, \mathrm{bottom})$ under the corresponding inclusion
$${\iotan}: \F(-Y_n, \coren,\mathrm{bottom}) \hookrightarrow \CF(-Y_n).$$ To prove the theorem, we need only understand the relationship between the inclusion maps $\iota,{\iotan}$, as governed by Theorem \ref{thm:core2}.  Indeed, the theorem follows immediately from

\bigskip
\noindent{\bf Claim:} Let $n$ be sufficiently large so that Theorem \ref{thm:core2} holds.  Then
$$\iota_* \ne 0\ \   \Longleftrightarrow \ \ \iotan_*\ne 0.$$

To prove the claim, we first translate it into a statement about the topmost knot Floer homology group by a duality theorem.    Consider the short exact sequence $$0\hookrightarrow \F(Y,K,\mathrm{top}-1)\hookrightarrow \CF(Y)\rightarrow \CFK(Y,K,\mathrm{top})\rightarrow 0,$$
and the associated connecting homomorphism 
$$  \HFK(Y,K,\mathrm{top})\overset{\delta_*}\longrightarrow H_*( \F(Y,K,\mathrm{top}-1)).$$
A duality theorem \cite[Proposition 2.5]{propOS} states that the Floer homology of $-Y$ is the Floer cohomology of $Y$.  The knot $K$ can be viewed in $-Y$, and there is a corresponding duality theorem for the filtrations \cite[Proposition 3.7]{knotOS} (see also \cite[Proposition 15]{He3} for the formulation we use here).  An immediate consequence of this duality, since the rank  $\HFK(Y,K,\mathrm{top})$
is 1,  is that $$ \iota_* \ne 0  \ \ \Longleftrightarrow \ \ \mathrm{ker}\ \delta_* \ne 0$$ 
The duality theorem holds for rationally null-homologous knots, and thus the claim reduces to showing that $$\mathrm{ker}\ \delta_*\ne 0 \ \ \Longleftrightarrow \ \ \mathrm{ker}\ \deltan_* \ne 0.$$
To do this, observe that Theorem \ref{thm:core2} shows that  $\HFK(Y_n,\coren,\mathrm{top})\cong H_*(Q_{-g})$, the homology of the $-g$-th quotient in the notation of that theorem, where $g=g(K)$ is the minimal genus of any embedded surface in the same homology class as $F$ (to see this, observe that $\Filt(Q_i)>\Filt(S_j)$ for all $i,j$, and that $H(Q_i)=0$ for all $i<-g(K)$, by the adjunction inequality \cite[Theorem 5.1]{knotOS}).  The map 
$$\deltan_*: \HFK_*(Y_n,\coren,\mathrm{top})\cong H_*(Q_{-g})\longrightarrow H_{*-1}(\F(Y_n,\coren,\mathrm{top}-1))$$ factors through the map induced by inclusion $S_{-g}\hookrightarrow \F(Y_n,\coren,\mathrm{top}-1)$.  Again, this follows from Theorem \ref{thm:core2}, as there are simply no generators in any other filtration levels which could be connected to those in $H_*(Q_{-g})$ by Whitney disks.  Thus  $\mathrm{ker} \ \deltan_*\ne 0$ if and only if     
$$H_*(Q_{-g})\longrightarrow H_{*-1}(S_{-g})$$ has non-trivial kernel or, equivalently,  if $$H_*(C\{i=0,j=-g\})\longrightarrow H_{*-1}(C\{i<0,j=-g\})$$ has non-trivial kernel.   But this last map is the same, as a relatively graded map, as  $$H_*(C\{i=0,j=g\})\longrightarrow H_{*-1}(C\{i=0,j<g\})$$ by the filtered chain homotopy equivalence between the vertical and horizontal complexes $C\{j=-g\}$ and $C\{i=0\}$, respectively \cite[Proposition 3.8]{knotOS}.  This last map, however, is $\delta_*$.

This completes the proof of the claim, and hence of Theorem \ref{thm:nonvanishing}.

\end{proof}

\subsection{Non-vanishing for integral slopes $n\ge 2g$}

\begin{theorem}\label{2g}
Suppose that the contact structure $\xi$, compatible with an open book $(Y, K)$ of genus $g$, has $c(\xi)\neq 0$. 
Then for all $n\ge2g$ the contact structure 
$\xi_n$ compatible with the induced rational open book $(Y_n,  K_n)$ has $c(\xi_n)\neq 0$.
\end{theorem}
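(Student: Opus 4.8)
\textbf{Proof proposal for Theorem \ref{2g}.}

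The plan is to bootstrap from Theorem \ref{thm:nonvanishing}, which already handles $n \gg 0$, by running a downward induction on $n$ that decreases the surgery coefficient one unit at a time, using the surgery exact triangle in knot Floer homology together with an adjunction inequality to control what happens at each step. The starting point is the observation, made in the proof of Theorem \ref{thm:nonvanishing}, that under the identification of Theorem \ref{thm:core2} the contact invariant $c(\xi_n)$ is non-zero precisely when the relevant inclusion-induced map $\iotan_*$ is non-zero, and that by the duality theorems \cite[Proposition 2.5]{propOS}, \cite[Proposition 3.7]{knotOS} this is equivalent to the connecting homomorphism $\deltan_*$ on the topmost knot Floer homology group of $K_n \subset Y_n$ having non-trivial kernel. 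So the whole theorem is a statement about the behavior of a single map $H_*(C\{i=0,j=-g\}) \to H_{*-1}(C\{i<0,j=-g\})$ derived from $\CFKinf(Y,K)$, which does not depend on $n$ at all --- what depends on $n$ is only whether the description of $\CFa(Y_n)$ and its $K_n$-filtration in terms of this subquotient picture remains valid.

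Concretely, first I would set up the integer surgery exact triangle relating $\CFa(Y_{n})$, $\CFa(Y_{n+1})$, and $\CFa(Y)$ (or the appropriate mapping cone description), and, crucially, its refinement to the filtered level induced by the cores $K_n, K_{n+1}$ of the surgery tori — this is the analogue for the filtration of the unoriented skein exact triangle, and the maps in it are the ones governing how $c(\xi_n)$ and $c(\xi_{n+1})$ are related. Next I would isolate the Spin$^c$ structure carrying the bottommost filtration level: by Theorem \ref{thm:core2}, $\HFK(Y_n, K_n, \mathrm{bottom})$ sits inside the piece $Q_{-g}$ (equivalently is computed by $C\{i=0, j = -g\}$), and the point is that as long as $n \ge 2g$, the adjunction inequality \cite[Theorem 5.1]{knotOS} forces $H_*(Q_i) = 0$ for $i < -g$, so the bottommost non-trivial piece is exactly $Q_{-g}$ and its position relative to the $S_j$'s is unambiguous — this is what can fail once $n$ drops below $2g$, since then the $Q$'s and $S$'s start to interleave and the clean "$\Filt(Q_i) > \Filt(S_j)$ for all $i,j$" ordering breaks down. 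Then I would feed this into the exact triangle: the generator of $H_*(\F(-Y_n, K_n, \mathrm{bottom}))$ maps, under the triangle, to the corresponding generator for $Y_{n+1}$ (up to lower-order terms controlled by the area filtration on the triangle-counting maps, exactly as in the proof of Theorem \ref{thm:core2}), and I would check that non-vanishing of $\iota^{n+1}_*$ propagates to non-vanishing of $\iotan_*$ — the key being that the map induced on the bottom subcomplexes by the triangle is, after the dualization, identified with the same $n$-independent map $\delta_*$ as in the proof of Theorem \ref{thm:nonvanishing}.

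The main obstacle I anticipate is the bookkeeping at the boundary case $n = 2g$: one has to verify that the adjunction inequality still pins down $Q_{-g}$ as the strictly bottommost piece (rather than merely one of the bottommost), and that no generator of $S_{-g}$ or any $S_j$ sneaks into or below the filtration level of $Q_{-g}$ when $n = 2g$ — this is precisely where the sharpness of the bound lives, and the $(2,k)$ torus knot example shows the statement genuinely fails for $n = 2g-1$, so the argument must use $n \ge 2g$ in an essential, not incidental, way. A secondary technical point is making the filtered exact triangle precise enough that one can track the image of the distinguished generator rather than just ranks; I would handle this by choosing the Heegaard (triple or quadruple) diagram so that the relevant subcomplexes are supported in a controlled region, mimicking the winding-region analysis already used for Theorem \ref{thm:core2}, and invoking the area/energy filtration to guarantee that the leading-order term of the triangle map is the identity on the bottom piece. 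Once non-vanishing of $c(\xi_n)$ is established for all integers $n \ge 2g$, the passage to all rational slopes $p/q \ge 2g$ is deferred to the Legendrian surgery argument of the next subsection and is not part of this theorem.
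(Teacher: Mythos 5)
Your high-level reduction is right: the theorem is equivalent to the assertion that the two-step filtration picture of Theorem \ref{thm:core2} persists down to $n=2g$, after which the non-vanishing criterion becomes an $n$-independent statement about a single map extracted from $\CFKinf(Y,K)$. But the mechanism you propose for establishing that persistence --- a downward induction propagating non-vanishing from $\xi_{n+1}$ to $\xi_n$ through the exact triangle for the triad $(Y, Y_n, Y_{n+1})$ --- has a genuine gap. First, that triangle does not split along $\SpinC(Y_n)$, so you cannot isolate the summand carrying the bottom filtration level; the paper instead uses the \emph{integer surgeries} sequence, whose middle term is the zero-surgery $Y_0$ and which does decompose into $n$ exact sequences indexed by $\Z/n\Z\subset H^2(Y_n)$. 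Second, and more seriously, exactness alone does not let you push non-vanishing of a distinguished class from $Y_{n+1}$ down to $Y_n$: decreasing the framing is the wrong direction for any Legendrian-surgery-type monotonicity, and "lower-order terms controlled by the area filtration" is precisely the point where the argument would have to do real work that you have not supplied. The paper sidesteps propagation entirely: it identifies $\HFK(\Yn,\coren,m)$, for every $n\ge 2g$, with the mapping cone of a single map $\widehat{F}_{W_{\spinct_m}}:\HFa(Y)\to\HFK(\Yzero,\corezero,\spinc_m)$ that is manifestly independent of $n$, so the answer for $n\ge 2g$ is literally the same chain-level data as for $n\gg 0$, where Theorem \ref{thm:nonvanishing} applies.

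The ingredient that makes this work, and which is absent from your proposal, is Eftekhary's adjunction inequality for the \emph{longitude Floer homology} $\HFK(\Yzero,\corezero,\spinc)$: it vanishes outside a window of $2g$ values of $\langle c_1(\spinc),[\widehat F]\rangle$, so for $n\ge 2g$ each residue class mod $2n$ contributes at most one non-trivial $\SpinC$ structure on $\Yzero$ to the middle term, and the mapping cone collapses to the $n$-independent one above. Your appeal to $H_*(Q_i)=0$ for $i<-g$ (the adjunction inequality applied to $\CFKinf(Y,K)$) is true but holds for all $n$ and does not by itself produce the threshold $2g$; the threshold comes from counting $\SpinC$ structures on the zero-surgery. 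Finally, to conclude that the contact invariant (and not merely the associated graded groups) behaves identically, the paper observes that the filtration has at most two steps with rank-one bottom, so the inclusion $\iota$ is determined by the $d_1$ differential, and then uses that $\HFa(\Yn,m)$ is independent of $n$ for $n\ge 2g-1$ to pin $d_1$ down. You flag the "associated graded versus full filtration" issue but do not resolve it; this two-step observation is how the paper closes that loop.
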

\begin{proof}  Perhaps the most aesthetically appealing proof would be to show that Theorem \ref{thm:core2} holds for all $n\ge 2g$, regardless of the knot.   We will take the easier route, and content ourselves to prove what is necessary for our application.

The  proof makes use of a surgery exact sequence, together with an adjunction inequality.   Recall the integer surgeries long exact sequence for the Floer homology of closed manifolds which differ by surgery along a null-homologous knot $K\subset Y$ \cite[Theorem 9.19]{propOS}:

$$...\rightarrow \bigoplus_{i=1}^{n}\HFa(Y)\rightarrow \HF(\Yzero)\rightarrow \HF(\Yn)\rightarrow...\ .$$ This sequence holds for any framing $n> 0$.   Moreover, the sequence decomposes as a direct sum of $n$ exact sequences corresponding to the $\Z/n\Z$ factor in $H^2(\Yn)\cong H^2(Y)\oplus \Z/n\Z$, 

$$...\rightarrow \HFa(Y)\longrightarrow \!\!\!\! \underset{\{\spinc |  \langle c_1(\spinc),[\widehat{F}]\rangle =2m\ \mathrm{mod}\ 2n \}}\bigoplus \!\!\!\! \HF(\Yzero,\spinc)\longrightarrow \HF(\Yn,m)\rightarrow...$$ where  $ \HF(\Yn,m)$  denotes the direct sum of the Floer homology groups associated to $\SpinC$ structures on $\Yn$ which extend over  the negative definite $2$-handle cobordism $W$ from $\Yn$ to $Y$ to   $\spinct\in \SpinC(W)$ satisfying $\langle c_1(\spinct),[\widehat{F}]\rangle + n = 2m.$   Note we have stated the splitting in a somewhat more concrete form than \cite[Theorem 9.19]{propOS}, implicitly using \cite[Section 7; particularly Lemma 7.10]{absgradOS}.   We also note that the exact sequence further decomposes along $\spinc\in \SpinC(Y)$, but we will not need this structure.

We use a generalization of this exact sequence to the case of knot Floer homology.   Let $K\subset Y$ be a null-homologous knot, and let $\mu\subset Y$ denote its meridian.  We can view $\mu$ as knot in each of the three $3$-manifolds of the sequence above, and consider their knot Floer homologies.  Note that $\mu\subset Y$ is an unknot, and $\mu\subset \Yzero$ (resp $\mu\subset  \Yn$) is isotopic to the core of the surgery, $\corezero$ (resp. $\coren$).  We have an exact sequence relating the knot Floer homology groups of these three knots
$$...\rightarrow \bigoplus_{i=1}^{n}\HFa(Y)\rightarrow \HFK(\Yzero,\corezero)\rightarrow \HFK(\Yn,\coren)\rightarrow...$$
where the first term is simply the Floer homology of $Y$, as $\mu$ is unknotted in this manifold.  While such an exact sequence has not, to our knowledge, appeared explicitly in the literature, it is  implicit from \ons's proof and nearly explicit in \cite{Eftekhary2}.  In any event, the sequence is easily obtained by adding an  additional basepoint in the handle region of the Heegaard quadruple diagram where the surgery curve is being varied (recall Figure \ref{fig:winding}). It is then straightforward to go through the now standard technique for proving the existence of surgery exact sequences (see, for instance \cite[Proof of Theorem 4.5]{branchedOS}), requiring that all differentials, chain maps, chain homotopies, etc. are defined by counting J-holomorphic Whitney polygons which avoid both basepoints.  As with the case of the Floer homology of closed $3$-manifolds, we have a splitting of this exact sequence into $n$ sequences according to the $\Spinc$ structures on $Y_n$:
$$...\rightarrow\HFa(Y)\longrightarrow \!\!\!\! \underset{\{\spinc |  \langle c_1(\spinc),[\widehat{F}]\rangle =2m \ \mathrm{mod}\ 2n \}}\bigoplus \!\!\!\! \HFK(\Yzero,\corezero,\spinc)\longrightarrow \HFK(\Yn,\coren,m)\rightarrow...$$
In addition, we know that the maps in the exact sequence are defined by counting $J$-holomorphic Whitney triangles associated to a doubly-pointed Heegaard triple diagram.  In each case there is a  $4$-manifold naturally associated to the triple diagram, and the first map is a sum over the triangle maps associated to homotopy classes whose $\SpinC$ structure extends over the cobordism to  $\spinc\in \SpinC(\Yzero)$ satisfying $\langle c_1(\spinc),[\widehat{F}]\rangle =2m$.  In particular, the component of the  map coming from a fixed homotopy class of triangles is independent of $n$.  Note that while these chain maps are likely an invariant of the embedded cylinder  in the cobordism coming from the trace of  $\mu$, we are not using this. We only use that  the Heegaard triple diagram defining the first map is independent of $n$.  

Given these exact sequences, we now apply Theorem \ref{thm:core2}.  This tells us that \begin{equation}\label{eq:HFKlarge} \HFK(\Yn,\coren,m)\cong H_*(S_m)\oplus H_*(Q_m),\end{equation} for sufficiently large $n$.  The exact sequence, however, tells us that this group is also the homology of the mapping cone of $$ \sum_{\spinct_m} \widehat{F}_{W_{\spinct_m}} :\HFa(Y) \longrightarrow  \!\!\!\! \underset{\{\spinc |  \langle c_1(\spinc),[\widehat{F}]\rangle =2m \ \mathrm{mod}\ 2n\}}\bigoplus \!\!\!\! \HFK(\Yzero,\corezero,\spinc)$$
where the sum is over all $\SpinC$ structures on the $2$-handle cobordism whose Chern class is congruent to $2m$, modulo  $2n$, and $\widehat{F}_{W_{\spinct_m}}$ is the map defined by counting $J$-holomorphic triangles representing these $\SpinC$ structures whose domains avoid both basepoints.  

The groups $\HFK(\Yzero,\corezero)$ were first studied by Eftekhary \cite{Eftekhary}, who referred to them as the {\em longitude Floer homology} groups.  He showed  \cite[Theorem 1.1]{Eftekhary} that they satisfy an adjunction inequality,  stating that $\HFK(\Yzero,\corezero,\spinc)=0$, unless \begin{equation}\label{eq:adj} -2g+2\le \langle c_1(\spinc),[\widehat{F}]\rangle \le 2g.\end{equation} 
Here $g$ denotes the minimal genus of any Seifert surface in the relative homology class of a fixed surface $F$, and $\widehat{F}$ denotes this latter surface capped off by the disk in the solid torus of the zero surgery.  Note that we have only stated the adjunction {\em inequality} aspect of \cite[Theorem 1.1]{Eftekhary} which in fact says that the bounds above are sharp.  Note, too, that  our inequality is asymmetric, due to the fact that we used  the map $\spinc_w(-): \Ta\cap \Tb\rightarrow \SpinC(\Yzero)$ coming from the basepoint $w$, whereas \cite{Eftekhary} uses the average $\OneHalf(c_1(\spinc_w(-))+c_1(\spinc_z(-)))$,  obtaining a symmetric inequality.  The important aspect of the inequality is that it implies there are at most $2g$ distinct $\SpinC$ structures on $\Yzero$ for which the middle term in the exact sequence is non-trivial.  It follows that for $n\ge 2g$, the groups under consideration $\HFK(\Yn,\coren,m)$, are isomorphic to the mapping cone of $$  \widehat{F}_{W_{\spinct_m}} :\HFa(Y) \longrightarrow  \HFK(\Yzero,\corezero,\spinc_m),$$
where $\spinct_m$, $\spinc_m$ are the $\SpinC$ structures on the cobordism and zero surgery, respectively, whose Chern classes satisfy Equation \eqref{eq:adj}.  Since these maps are independent of $n$, it follows that Equation \eqref{eq:HFKlarge} holds for all $n\ge 2g$.  Note, however, that the groups above are  the knot Floer homology groups associated to all relative $\SpinC$ structures on $\Yn\setminus \coren$ which project to $\spinc_m\in \SpinC(Y_n)$, under \eqref{eq:filling}.  Since our description of the contact invariant is in terms of the differential on the spectral sequence which starts at these groups and converges to $\HFa(\Yn,m)$, we must show that the filtration of $\CFa(\Yn,m)$ induced by $\coren$ agrees with the description of Theorem \ref{thm:core2}. (Note that Equation \eqref{eq:HFKlarge} states only that 
the associated graded homology groups agree.  We need to understand the entire filtration, and not simply the $E^1$-term of the corresponding spectral sequence.)
 In the case at hand, however, identification of filtrations  is immediate.  We are interested in  the inclusion of the bottom subcomplex of the knot Floer homology filtration  into the  Floer homology of  $\Yn$ when $\coren$ is rationally fibered; 
 namely, we would like to know whether the image of the inclusion map
 $$\iota: \F(Y, K,\mathrm{bottom}) \hookrightarrow \CF(\Yn,m)$$ is a boundary.
 
 Since the bottom subcomplex has rank one homology, this is determined by the homologies $\HFa(\Yn,m)$ and $$\HFK(\Yn,\coren,m)\cong H_*(S_m)\oplus H_*(Q_m)\cong \FF\oplus H_*(Q_m).$$ Indeed, the  adjunction  argument given above shows that  for $n\geq 2g$, $\HFK(\Yn,\coren,m)$ can have at most two Alexander gradings with non-trivial knot Floer homology, i.e. the filtration has at most two steps, 
 with the bottom subcomplex of rank 1;  then, the differential $\d_1$ on $\HFK$ computes $\HFa(\Yn,m)$ and is determined 
 by the image of $\iota$ in homology (and conversely, $\d_1$ determines this image).  The group $\HFa(\Yn,m)$, however, is independent of $n$ once $n\ge 2g-1$, by \cite[Remark 4.3]{knotOS}, 
 so $\d_1$ should be the same for all $n\geq 2g$. This completes the proof.

\end{proof}

\begin{remark} The key ingredient in our proofs of Theorems  \ref{thm:nonvanishing} and \ref{2g} is the understanding of the filtered chain homotopy 
type of $\CFK(\Yn(K), K_n)$. For surgeries on a knot in $S^3$ (or more generally, in an integer homology L-space), this filtered chain complex can be understood via bordered Floer homology \cite[Sections 10, 11]{LOT}; in fact, the techniques 
of \cite{LOT} provide the answer for an arbitrary knot and arbitrary surgery coefficient. However, \cite{LOT} doesn't provide the answer for knots in an arbitrary 3-manifold $Y$; in any case, we find that a simple direct argument works better for our purposes.    

\end{remark}
  
\subsection{From integer to rational surgeries}

We have established Theorem \ref{p/q-surgery} for the case of integral surgery. The following lemma extends Theorem \ref{p/q-surgery} to rational surgeries. The proof of this lemma was explained to us by John Etnyre and Jeremy Van Horn-Morris.

\begin{lemma} \label{n-to-p/q} Let $(Y,K)$ be an open book decomposition compatible with the contact structure $\xi$. 
If  $p/q>n>0$, the contact manifold $(Y_{p/q}, \xi_{p/q})$ can be obtained from $(Y_n, \xi_n)$ by Legendrian surgery on a link.  
\end{lemma}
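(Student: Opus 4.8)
\textbf{Proof proposal for Lemma \ref{n-to-p/q}.} The plan is to realize the passage from $Y_n$ to $Y_{p/q}$ as a sequence of surgeries with framing $-1$ relative to a suitable contact framing, since such surgeries are precisely Legendrian surgeries and are known to preserve (or rather, not kill) tightness and the contact invariant. The starting observation is purely topological: because $p/q > n > 0$, one can expand $p/q$ in a negative continued fraction relative to $n$,
$$ \frac{p}{q} = n - \cfrac{1}{a_1 - \cfrac{1}{a_2 - \cdots - \cfrac{1}{a_k}}}, $$
with all $a_i \le -2$ (this is the standard continued fraction algorithm that arises in the slam-dunk / Rolfsen-twist description of lens-space-type surgeries; the condition $p/q>n$ is exactly what makes the first term $n$ and all subsequent $a_i$ at most $-2$). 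This identity says that $Y_{p/q}$ is obtained from $Y_n$ by attaching a chain of $2$-handles along a linear plumbing-type link $L = L_1 \cup \cdots \cup L_k$, where $L_1$ is a meridian of $K_n$ (equivalently a meridian of the binding, pushed into $Y_n$), $L_{i+1}$ is a meridian of $L_i$, and $L_i$ carries framing $a_i \le -2$.

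The contact-geometric input is then to run this surgery description inside the contact manifold $(Y_n,\xi_n)$. First I would isotope $L_1$, the meridian of the binding, to a Legendrian knot: the meridian of the binding of a (rational) open book bounds a disk meeting the binding once, so it is an unknot in a Darboux ball and can be made Legendrian with Thurston--Bennequin invariant $\tb = -1$ (and then stabilized to realize any $\tb \le -1$). Performing Legendrian surgery on $L_1$ has the effect of $(\tb - 1) = -2$ surgery, or more negative after stabilizations; by choosing the number of stabilizations appropriately one arranges the smooth framing to be $a_1$. One then repeats: after the first Legendrian surgery, $L_2$ (a meridian of $L_1$) can again be placed on the boundary of a Darboux ball — meridians of the surgered-in cores are small unknots — Legendrian-realized, stabilized to achieve contact framing equal to $a_2$, and surgered. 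Iterating $k$ times produces $(Y_{p/q},\xi')$ for some contact structure $\xi'$ obtained from $\xi_n$ by Legendrian surgery on the link $L$.

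The remaining point, and the one requiring the most care, is to identify $\xi'$ with $\xi_{p/q}$, i.e. to check that the contact structure produced by this chain of Legendrian surgeries is the one compatible with the rational open book $(Y_{p/q}, K_{p/q})$. For this I would argue at the level of open books: surgery on the binding changes the open book in a controlled way, and one should see that the chain of $(-1)$-contact surgeries on the meridional link corresponds to a sequence of Legendrian stabilizations/modifications of the page and monodromy that is compatible — via \cite{BEV} — with the rational open book of slope $p/q$. Concretely, one checks that the surgered manifold carries the rationally fibered knot $K_{p/q}$ with its fiber the correct rational Seifert surface (the complement is unchanged, only the meridian is being redefined by the continued fraction data), and that the Legendrian surgery framings are consistent with the rational open book's contact framing convention; since \cite{BEV} shows the rational open book determines $\xi_{p/q}$ uniquely up to isotopy, matching the complement, the binding, and the framing data suffices. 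The main obstacle is precisely this bookkeeping: making sure that the contact framings realized by the stabilized Legendrian meridians match the slopes $a_i$ dictated by the continued fraction, and that no orientation or sign conventions are mismatched between the surgery description of $Y_{p/q}$ and the compatibility convention of \cite{BEV}. Once this is in place, Theorem \ref{p/q-surgery} for integral slopes $n$ (with $c(\xi_n) \ne 0$, valid for $n \ge 2g$) combined with the fact that Legendrian surgery does not annihilate a non-zero contact invariant (as $-1$-surgery cobordism maps send $c(\xi_n)$ to $c(\xi_{p/q})$, up to sign) yields $c(\xi_{p/q}) \ne 0$ for all $p/q \ge 2g$, completing the proof of Theorem \ref{p/q-surgery}.
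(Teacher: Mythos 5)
Your topological reduction (slam-dunk, then a negative continued fraction expansion realizing $Y_{p/q}$ from $Y_n$ by a chain of meridians with framings $a_i\le -2$, each Legendrian-realizable) is fine as far as it goes, but it only reproduces what the paper itself points out in the remark \emph{after} Lemma \ref{n-to-p/q}: this argument shows that $Y_{p/q}$ carries \emph{some} tight contact structure with non-vanishing invariant. The entire content of the lemma is the identification of the result of these Legendrian surgeries with the \emph{specific} contact structure $\xi_{p/q}$ supported by the rational open book $(Y_{p/q},K_{p/q})$, and your third paragraph, which is where that identification would have to live, is a placeholder rather than an argument. Two concrete problems. First, placing the Legendrian meridians in Darboux balls severs any connection between the surgery and the open book fibration: to invoke the uniqueness statement of \cite{BEV} you must exhibit the surgered contact structure as one \emph{supported by} the rational open book, which requires the surgery to be performed compatibly with the fibration by pages in a neighborhood of the binding; ``matching the complement, the binding, and the framing data'' does not suffice, because the contact structure on the complement of the binding is not determined by that data alone. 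Second, ``stabilize to achieve framing $a_i$'' underdetermines the answer: stabilizations carry signs, different sign choices yield non-isotopic tight structures on the glued-in solid tori (different bypass layers in Honda's classification), and only one of these choices produces $\xi_{p/q}$.

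The paper's proof is built precisely to close this gap. It works entirely inside a $T^2\times[0,1]$ collar of the binding carrying a linearly rotating tight contact structure, performs the Legendrian surgeries on leaves of pre-Lagrangian tori $T_s$ inside that collar (where the fibration by pages extends compatibly with the contact structure), tracks the boundary slope via edges of the Farey tessellation, and uses Honda's classification together with fillability to rule out positive bypass layers, so that the resulting layer is again the linearly rotating one. Collapsing the boundary torus then manifestly yields the open book $(Y_{p/q},K_{p/q})$ supporting the surgered contact structure, which is what identifies it with $\xi_{p/q}$. If you want to salvage your continued-fraction approach, you would need to realize your meridional chain as leaves of such pre-Lagrangian tori and control the stabilization signs — at which point you have essentially reconstructed the paper's argument.
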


Since the contact invariant is natural with respect to Legendrian surgeries, we have 

\begin{cor} If $p/q>n>0$ and  $c(\xi_n)$ does not vanish, then $c(\xi_{p/q})$  is non-zero.
\end{cor}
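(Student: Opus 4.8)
The plan is to obtain the corollary as an immediate consequence of Lemma~\ref{n-to-p/q} and the functoriality of the \ons\ contact invariant under Legendrian (Weinstein) surgery; the latter is the formal content of the remark that ``the contact invariant is natural with respect to Legendrian surgeries.''

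First I would apply Lemma~\ref{n-to-p/q}: the hypothesis $p/q>n>0$ produces a Legendrian link $\Link\subset(Y_n,\xi_n)$ on which Legendrian surgery yields $(Y_{p/q},\xi_{p/q})$. Attaching the corresponding Weinstein $2$-handles to $Y_n\times[0,1]$ (one per component of $\Link$, each with framing one less than the contact framing) produces a cobordism $W$ with $\partial W=-Y_n\sqcup Y_{p/q}$, on which $\xi_n$ sits at the concave end and $\xi_{p/q}$ at the convex end. Write $\overline{W}$ for this cobordism read from $-Y_{p/q}$ to $-Y_n$.

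Next I would invoke the naturality of the contact invariant under such cobordisms \cite{contOS} (in the form used repeatedly by Lisca--Stipsicz \cite{LS}; see also Plamenevskaya): applying the functoriality one handle at a time, the induced map
$$
F_{\overline{W}}\colon\HFa(-Y_{p/q})\longrightarrow\HFa(-Y_n),
$$
summed over all $\SpinC$ structures on $\overline{W}$, carries $c(\xi_{p/q})$ to $c(\xi_n)$. Since $c(\xi_n)\neq0$ by hypothesis and $F_{\overline{W}}$ is $\FF$-linear --- so that $F_{\overline{W}}(0)=0\neq c(\xi_n)$ --- it follows that $c(\xi_{p/q})\neq0$.

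I do not expect a genuine obstacle here beyond citing the functoriality statement correctly; I would only need to note that the naturality result holds with $\FF$-coefficients and for surgeries on knots in an arbitrary closed $3$-manifold $Y$, which is precisely the generality at hand. (Combined with Theorem~\ref{2g}, this corollary also finishes the proof of Theorem~\ref{p/q-surgery}: given any rational $p/q\ge 2g$ not already covered by Theorem~\ref{2g}, choose an integer $n$ with $2g\le n<p/q$, so that $c(\xi_n)\ne0$ by Theorem~\ref{2g}, and then apply the corollary to conclude $c(\xi_{p/q})\ne0$.)
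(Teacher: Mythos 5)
Your proposal is correct and is precisely the paper's argument: the corollary is stated as an immediate consequence of Lemma \ref{n-to-p/q} together with the naturality of the contact invariant under Legendrian surgery, which is exactly the chain of reasoning you spell out (with the map $F_{\overline{W}}\colon\HFa(-Y_{p/q})\to\HFa(-Y_n)$ carrying $c(\xi_{p/q})$ to $c(\xi_n)$). Your added remark on how this combines with Theorem \ref{2g} to finish Theorem \ref{p/q-surgery} also matches the paper's intended use of the corollary.
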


\begin{proof}[Proof of Lemma \ref{n-to-p/q}] We will prove the lemma by doing Legendrian surgery in certain thickened tori $T^2 \times [0,1]$ inside 
$(Y_n, \xi_n)$.
(More precisely, $T^2$ here stands for the boundary of a tubular neighborhood of the binding.) Tight contact structures on $T^2 \times [0,1]$ were classified in \cite{Ho}.

To begin, consider an honest open book $(Y, K)$ with the induced contact structure $\xi$. Remove a small neighborhood of $K$ 
with convex boundary. For the torus $T^2= \d (Y\setminus \nbdK)$ (oriented as the boundary of $Y\setminus\nbdK$), 
fix the identification $T^2=\RR^2/\ZZ^2$ so that the longitude corresponds to $(1, 0)$, and the meridian to $(0,1)$.
There are two parallel dividing curves on this torus; let $s_0$ denote their slope. (We write $s=y/x$ for the slope corresponding to $(x, y)$.)
Notice that since $K$ is a transverse knot, we can assume that $s_0= -n_0$ for some integer $n_0>0$. 
(The number $n_0$ gets larger if we choose a smaller 
neighborhood of $K$.)    

We will perform $n$-surgery on $(Y, \xi)$ by adding ``extra rotation'' in the neighborhood of the binding. Consider a block 
$T^2 \times [0,1]$ with a tight, minimally twisting, positively co-oriented contact structure $\zeta$ whose dividing curves 
rotate linearly from slope $s_0$ on $ T_0=T^2\times \{0\}$ through larger negative slopes, vertical slope, and then through large positive slopes to  
$s_1=n$. (This $T^2 \times [0,1]$ with ``linearly rotating'' contact structure
is isomorphic to a subset of the standard Stein fillable 3-torus.) 
    Attach this $T^2 \times [0,1]$ block to  $Y\setminus \nbdK$ so that $T^2=\d (Y\setminus \nbdK)$ is glued to $T_0$, 
and the dividing curves match. Now, $T_1=T^2\times \{1\}$ becomes the boundary torus; we can perturb this convex torus so that it becomes pre-Lagrangian,
with the linear characteristic foliation given by curves of slope $n$. The fibration of $Y\setminus \nbdK$ 
by the pages of the open book $(Y, K)$ extends into $T^2 \times [0,1]$  (compatibly with the contact structure). 
Collapsing to a point each leaf of the foliation of $T_1$, we get the surgered manifold $Y_n$, 
equipped with a well-defined contact structure and an open book decomposition. The contact structure is isotopic to $\xi_n$ and compatible 
with the open book: this is clear away from the binding, and we know that a contact structure extends uniquely over the binding  \cite{BEV}.

We will now perform Legendrian surgeries inside the block  $T^2 \times [0,1] \subset (Y\setminus \nbdK) \cup_{T_0} T^2 \times [0,1]$ 
to change the slopes on $T_1$ to $p/q$. After collapsing $T_1$ to a circle in the resulting contact manifold, we will get an open book compatible with $(Y_{p/q}, \xi_{p/q})$, together with a sequence of Legendrian surgeries that produce $(Y_{p/q}, \xi_{p/q})$ out of  $(Y_n$, $\xi_n)$.

Contact structures on $T^2 \times [0,1]$ were studied by Honda in \cite{Ho}, and can be conveniently 
described using the Farey tessellation of the unit disk \cite[Section 3.4.3]{Ho}. 
By Honda's work, the contact structures we are interested in decompose into ``bypass layers'' as dictated by the Farey tessellation 
and the boundary slopes. For the linearly rotating contact structures we considered above, all the bypass layers have negative sign. (We will not explain these terms here; the reader is referred to \cite{Ho} for all the details.)
The tessellation picture (Figure \ref{farey}) will also help 
to keep track of the transformation of the boundary slope of $T_1$  under Legendrian surgeries. 
Our toric block $T^2 \times [0,1]$ corresponds to the arc of the unit circle sweeping clockwise from $-\frac{n_0}1$ to $\frac{n}1$;
thus we will be focusing on the left side of the tessellation disk.

 \begin{figure}[htb] 
 \bigskip
  \bigskip
	\labellist
	\small\hair 2pt
	\pinlabel $\frac01$ at 170 81
	\pinlabel $\frac10$ at -5 81
	\pinlabel $\frac11$ at 85 170
	\pinlabel $-\frac11$ at 80 -9
	\pinlabel $\frac12$ at 149 143
	\pinlabel $-\frac12$ at 144 17
	\pinlabel $\frac21$ at 23 142
	\pinlabel $-\frac{2}1$ at 15 17
	\pinlabel $\frac32$ at 53 162
	\pinlabel $\frac31$ at 4 113
	\pinlabel $\frac53$ at 40 156
		\pinlabel $\frac{12}7$ at 31 149
	%\pinlabel $p+1$  at 63 99
	%\pinlabel $p$  at 106 99
	%\pinlabel $1$  at 63 25
	%\pinlabel $2$  at 63 62
	\endlabellist
\centering	
\includegraphics[scale=1.0]{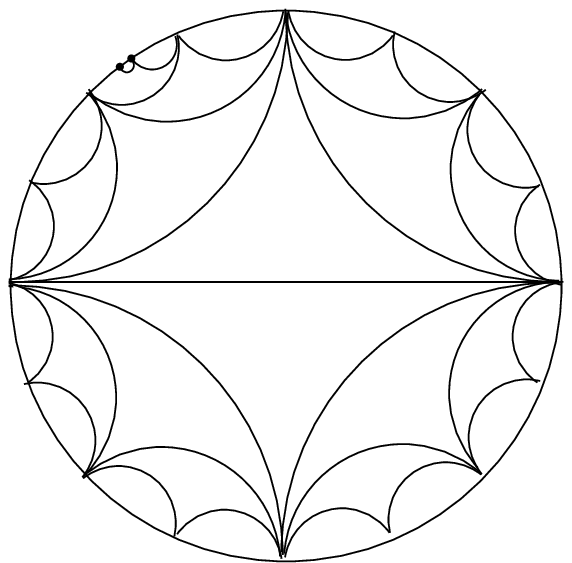} 
\bigskip
\caption{The Farey tessellation.}
\label{farey} 
\end{figure}
  
The results of \cite{Ho} imply  that  if the boundary slopes of a toric slice are $s_0$ and $s_1$, then for any given rational slope $s$ between $s_0$ and $s_1$ there  exists a pre-Lagrangian torus $T_s$ in  $T^2 \times [0,1]$ such that every leaf of its (linear) characteristic foliation has slope $s$. (In our case, $s_1$ is always greater than $s_0$, so $s$ can vary in $(-\infty, s_0]  \cup (s_1, \infty)$; this means that $s$ lies on the clockwise 
arc from $s_0$ to $s_1$.)

 We will be performing Legendrian surgeries on leaves of such foliations 
(with appropriate slopes). The key observation is that, if inside a block with the back slope $s_1=\frac{w}t$ we 
carry out a Legendrian surgery on a leaf with slope $s=\frac{u}v>s_1$, such that there is a tessellation edge from $s$ to $s_1$, then after surgery the back slope will be $s'_1=\frac{u+w}{v+t}$. (In other words, the new slope $s_1$ is the midpoint of the arc between $s$ and $s_1$, and can be reached 
from $s_1$ by hopping to the left along a shorter edge.) The slope transforms this way because the surgery can be interpreted as splitting along  
$T_s$ and regluing after a Dehn twist; the existence of an edge from $s$ to $s_1$ ensures that the curves corresponding to $(v, u)$ 
and $(t, w)$ intersect in $T^2$ homologically once, and thus after the Dehn twist, $s'_1$ must be the slope of the curve $(v+t, u+w)$.   We have found the boundary slopes of the resulting contact structure on $T^2\times [0,1]$, and thus can assert (using classification results of \cite{Ho}) that 
the contact structure decomposes into the bypass layers dictated by the Farey tessellation. We can conclude that it is isotopic to 
the corresponding linearly rotating contact structure if all the bypass layers are negative. But it is clear that at least some bypass layers remain negative, and if some other were positive, the resulting contact structure would be overtwisted. This cannot happen since our $T^2\times [0,1]$ embeds in a fillable contact manifold, both before and after Legendrian surgery.

Therefore, we have shown that Legendrian surgeries on leaves of the characteristic foliation on tori relate our model contact
structures to one another, changing the boundary slopes as predicted by the edges of the Farey tessellation.  
Now it remains to find the shortest sequence of edges connecting $\frac{n}1$ to $\frac{p}q$ in the tessellation picture, 
and perform the corresponding Legendrian surgeries. Suppose that $m$ is an integer such that $m+1>p/q>m$. If  $m > n$, the sequence starts with hopping from $\frac{n}1$ to $\frac{m}1$ through integer slopes. Each of these hops corresponds to  Legendrian surgery on a leaf in the pre-Lagrangian torus 
with slope $\frac{1}0$. Next, we continue along the edges from $m$ to $p/q$.

We illustrate this process by an example, describing  a sequence of Legendrian surgeries that produces $(Y_{12/7}, \xi_{12/7})$ from $(Y_{1/1}, \xi_{1/1})$. Constructing the point $\frac{12}7$ 
in the tessellation disk, we get from $\frac{1}1$ to $\frac{12}7$ by moving along three edges: the edge from $\frac{1}1$ to $\frac32$ (the midpoint of $\frac11$ and $\frac12$), then the edge from $\frac32$ to $\frac53$ (the midpoint of $\frac32$ and $\frac21$), then
the edge from  $\frac53$ to $\frac{12}7$ (the midpoint of $\frac53$ and $\frac{7}4$). These edges are shown on Figure \ref{farey}.
Therefore, $(Y_{12/7}, \xi_{12/7})$ can be obtained from $(Y_{1/1}, \xi_{1/1})$ by performing Legendrian surgery on the 3-component link 
consisting of two leaves of the characteristic foliation in the pre-Lagrangian torus with slope  $\frac21$, and a leaf of the foliation in the torus 
with slope $\frac{7}4$. The general case is treated similarly.

\end{proof}

\begin{remark} It is easy to see that under the hypotheses of Lemma \ref{n-to-p/q}, the manifold $Y_{p/q}$ carries a tight contact structure 
(with a non-vanishing invariant) for 
every $p/q>n$. Indeed, by the slam-dunk move \cite{GS}, performing  $p/q$-surgery on $K$ is equivalent to performing $n$-surgery on $K$, 
followed by  $r$-surgery on the meridian of $K$, where $r=\frac{q}{qn-p}$. Since $r<0$, by \cite{DGS} an $r$-surgery can be realized by a sequence 
of Legendrian surgeries, which results in a contact structure with non-vanishing contact invariant.

Lemma \ref{n-to-p/q} establishes a stronger result: a specific contact structure $\xi_{p/q}$, arising from the given open book,
 has non-vanishing contact invariant $c(\xi_{p/q})$. 
\end{remark}

\end{document}